\newcommand{\BC}{{\mathbb {C}}}
\newcommand{\CF}{{\mathcal {F}}}
\newcommand{\CH}{{\mathcal {H}}}
\newcommand{\CJ}{{\mathcal {J}}}
\newcommand{\RD}{{\mathrm {D}}}
\newcommand{\RU}{{\mathrm {U}}}
\newcommand{\End}{{\mathrm{End}}}
\newcommand{\Hom}{{\mathrm{Hom}}}
\renewcommand{\Im}{{\mathrm{Im}}}
\newcommand{\Ker}{{\mathrm{Ker}}}
\newcommand{\rh}{{\mathrm{h}}}
\newcommand{\Sp}{{\mathrm{Sp}}}
\newcommand{\vsp}{{\vspace{0.2in}}}
\newcommand{\con}{\textit{C}}
\newcommand{\oB}{\operatorname{B}}
\newcommand{\oJ}{\operatorname{J}}
\newcommand{\oH}{\operatorname{H}}
\newcommand{\oU}{\operatorname{U}}
\newcommand{\oZ}{\operatorname{Z}}
\newcommand{\g}{\mathfrak g}
\newcommand{\gC}{{\mathfrak g}_{\C}}
\newcommand{\h}{\mathfrak h}
\newcommand{\p}{\mathfrak p}
\renewcommand{\l}{\mathfrak l}
\newcommand{\s}{\mathfrak s}
\newcommand{\z}{\mathfrak z}
\newcommand{\C}{\mathbb{C}}
\newcommand{\R}{\mathbb R}
\newcommand{\abs}[1]{\lvert#1\rvert}
\renewcommand{\rh}{\mathrm{h}}
\newcommand{\rz}{\mathrm{z}}
\newcommand{\rw}{\mathrm{w}}
\newcommand{\la}{\langle}
\newcommand{\ra}{\rangle}
\newcommand{\be}{\begin {equation}}
\newcommand{\ee}{\end {equation}}
\newcommand{\bee}{\begin {equation*}}
\newcommand{\eee}{\end {equation*}}
\renewcommand{\th}{\textrm{h}}
\theoremstyle{Theorem}
\newtheorem{introconjecture}{Conjecture}
\newtheorem{introtheorem}[introconjecture]{Theorem}
\newtheorem{introcorollary}[introconjecture]{Corollary}
\theoremstyle{Theorem}
\newtheorem{lem}{Lemma}[section]
\newtheorem{corl}[lem]{Corollary}
\newtheorem{leml}[lem]{Lemma}
\newtheorem{prpl}[lem]{Proposition}
\theoremstyle{Theorem}
\newtheorem{prp}{Proposition}[section]
\newtheorem{lemp}[prp]{Lemma}
\newtheorem{prpp}[prp]{Proposition}
\theoremstyle{Plain}
\theoremstyle{Definition}
\begin{document}

\title{On representations of real Jacobi groups}

\author [B. Sun] {Binyong Sun}
\address{Academy of Mathematics and Systems Science\\
Chinese Academy of Sciences\\
Beijing, 100190,  P.R. China} \email{sun@math.ac.cn}

\subjclass[2000]{22E27, 22E30, 22E46}

\keywords{Jacobi groups, Heisenberg groups, Casselman-Wallach
representations, matrix coefficients, Gelfand-Kazhdan criteria}

\begin{abstract}

We consider a category of continuous Hilbert space representations
and a category of smooth Fr\'{e}chet representations, of a real
Jacobi group $G$. By Mackey's theory, they are respectively
equivalent to certain categories of representations of a real
reductive group $\widetilde L$.  Within these categories, we show
that the two functors of taking smooth vectors for $G$, and for
$\widetilde L$, are consistent with each other. By using
Casselman-Wallach's theory of smooth representations of real
reductive groups, we define matrix coefficients for distributional
vectors of certain representations of $G$. We also formulate
Gelfand-Kazhdan criteria for Jacobi groups which could be used to
prove the multiplicity one theorem for Fourier-Jacobi models.

\end{abstract}

\maketitle

\section{Introduction}

By a (complex) representation of a Lie group, we mean a continuous linear
action of it on a complete locally convex space. In this paper, a
locally convex space means a complex topological vector space which
is Hausdorff and locally convex. When no confusion is possible, we
do not distinguish a representation with its underlying space. Representations of reductive groups are studied intensively in the literature. These are the most interesting groups from the point of
view of the Langlands program. But there is another family of groups which show up quite often in number theory, namely, Jacobi groups.

We work in the setting of Nash groups. By a Nash group, we mean a group which is simultaneously a Nash manifold so that all group operations (the multiplication and the inversion) are Nash maps. The reader is referred to \cite{Sh, Sh2} for details on Nash manifolds and Nash groups. Let $G$ be a Nash group. A finite dimensional real representation $E$ of $G$ is said to a Nash representation if the action map $G\times E\rightarrow E$ is Nash. A Nash group is said to be almost linear if it admits a Nash representation with finite kernel. It is said to be unipotent if it admits a faithful Nash representation so that all group elements act as unipotent operators. Every unipotent Nash group is connected and simply connected. As in the case of linear algebraic groups, every almost linear Nash group has a unipotent radical, namely, a largest unipotent normal Nash subgroup of it. (Recall that every Nash subgroup is automatically closed.) An almost linear Nash group is said to be reductive if its unipotent radical is trivial. Assume that $G$ is almost linear, and denote by $H$ its unipotent radical. The center of $G$ is also an almost linear Nash group. Its unipotent radical $Z$ is called the unipotent center of $G$. This equals to the intersection of $H$ with the center of $G$. If $\widetilde G\rightarrow G$ is a finite cover of Lie groups. Then $\widetilde G$ is uniquely a Nash group so that the covering map is Nash. In this case, the unipotent radical (and the unipotent center) of $G$ and $\widetilde G$ are canonically identified.

As usual, we use the corresponding lower case German letter to denote the Lie algebra of a Lie group. We say that the almost linear Nash group $G$ is a real Jacobi group if $[\h,\h]\subset \z$ and the map
\[
  [\,,\,]: \h/\z\times \h/\z\rightarrow \z
\]
is non-degenerate in the sense that the alternating form
\[
  \phi\circ [\,,\,]: \h/\z\times \h/\z\rightarrow \R
\]
is non-degenerate for some linear functional $\phi:\z\rightarrow \R$. By definition, every reductive Nash group is a real Jacobi group. It is easy to see that all real Jacobi groups are unimodular.

Now assume that $G$ is a real Jacobi group. Fix a unitary character $\psi$ on $Z$ which is generic in the sense that the alternating form
\begin{equation}\label{sym}
          \frac{d\psi}{\sqrt{-1}} \circ [\,,\,]: \h/\z\times \h/\z \rightarrow \R
\end{equation}
is non-degenerate, where $d\psi$ denotes the differential of $\psi$. A representation of $G$ is said to be a $\psi$-representation if $Z$ acts through the character $\psi$.

Among all representations, unitary ones are most important for many
applications. But non-unitary representations also occur naturally
even in the study of unitary ones. In general, we will consider $\psi$-representations of $G$ on Hilbert spaces so that $H$ acts by unitary operators. Denote by
$\CH\mathbf{mod}_{G,\psi}$ the category of these representations.
Morphisms in this category are $G$-intertwining continuous linear
maps (which may or may not preserve the inner products).

Smooth representations are also important for many purposes. In this
paper, we are very much concerned with smooth Fr\'{e}chet
$\psi$-representations of $G$ of moderate growth. (A representation is said to be Fr\'{e}chet if
its underlying space is. See Section \ref{smoothing} for the
notion of smooth representations, and see Section \ref{functions}
for the moderate growth condition.) Following F. du Cloux
(\cite{du91}), denote by $\CJ\mathbf{mod}_{G,\psi}$ the category of
these representations.

Given any representation $V$ in $\CH\mathbf{mod}_{G,\psi}$, it turns
out (cf. Corollary \ref{modg}) that its smooth vectors $V_{\infty}$
form a representation in $\CJ\mathbf{mod}_{G,\psi}$ (see Section
\ref{smoothing} for the notion of smooth vectors). Therefore we get
a functor (the smoothing functor)
\begin{equation}\label{sm1}
  (\,\cdot\,)_{\infty}:\CH\mathbf{mod}_{G,\psi}\rightarrow
  \CJ\mathbf{mod}_{G,\psi}.
\end{equation}
We are aimed to show that the smoothing functor \eqref{sm1} can be
identified with the smoothing functor for a reductive group.

To be more precise, use the alternating form \eqref{sym}, we form
the real symplectic group $\Sp(\h/\z)$ and its metaplectic double cover
$\widetilde{\Sp}(\h/\z)$. The adjoint action induces a homomorphism
\[
 L:=G/H \rightarrow \Sp(\h/\z).
\]
Define the fibre product
\[
  \widetilde L:=L\times_{\Sp(\h/\z)}\widetilde{\Sp}(\h/\z).
\]
This is a double cover of $L$. All the groups $\Sp(\h/\z)$, $\widetilde{\Sp}(\h/\z)$, $L$ and $\widetilde L$ are canonically reductive Nash groups.

A representation of $\widetilde L$ is said to be
genuine if the nontrivial element in the kernel of the covering map $\widetilde L\rightarrow L$ acts as the scalar multiplication by $-1$. This terminology applies to other double covers of groups.
Denote by $\CH\mathbf{mod}_{\widetilde{L},\textrm{gen}}$ the
category of genuine representations of $\widetilde{L}$ on Hilbert
spaces, and by $\CJ\mathbf{mod}_{\widetilde{L},\textrm{gen}}$ the
category of genuine smooth Fr\'{e}chet representations of
$\widetilde{L}$ of moderate growth. As in the case of Jacobi groups,
we still have the smoothing functor:
 \begin{equation}\label{sm2}
  (\,\cdot\,)_{\infty}:\CH\mathbf{mod}_{\widetilde{L},\textrm{gen}}\rightarrow
  \CJ\mathbf{mod}_{\widetilde{L},\textrm{gen}}.
\end{equation}

Put
\[
  \widetilde G:=G\times_L\widetilde{L}.
\]
This is a double cover of $G$ so that $\widetilde G/H=\widetilde L$. In order to relate $\psi$-representations of $G$ to genuine representations of $\widetilde{L}$, we fix a unitary oscillator representation $\omega_{\psi}^\th$ of $\widetilde G$ corresponding to $\psi$: $\omega_\psi^\th$ is a genuine unitary $\psi$-representation of $\widetilde G$ which remains irreducible when restricted to $H$. Such a representation always exists (see Section \ref{jacobi}), and all others are unitarily isomorphic to twists of $\omega_{\psi}^\th$ by unitary characters of $L$. Denote by $\omega_{\psi}$ the smoothing of $\omega_{\psi}^\th$. We construct in Section \ref{jacobi} four functors:
\begin{eqnarray}
% \nonumber to remove numbering (before each equation)
 \label{fu1}\Hom_H (\omega_{\psi}^{\th},\cdot) &:& \CH\mathbf{mod}_{G,\psi}\rightarrow
\CH\mathbf{mod}_{\widetilde{L},\textrm{gen}}, \\
    \label{fu2}      \omega_{\psi}^\th\widehat{\otimes}_\th\,\cdot           &:&   \CH\mathbf{mod}_{\widetilde{L},\textrm{gen}}\rightarrow
\CH\mathbf{mod}_{G,\psi},  \\
   \label{fu3} \Hom_H (\omega_{\psi},\cdot) &:& \CJ\mathbf{mod}_{G,\psi}\rightarrow
\CJ\mathbf{mod}_{\widetilde{L},\textrm{gen}}, \\
    \label{fu4}    \omega_{\psi}\widehat{\otimes}\,\cdot           &:&   \CJ\mathbf{mod}_{\widetilde{L},\textrm{gen}}\rightarrow
\CJ\mathbf{mod}_{G,\psi}.
                      \end{eqnarray}
The usual meanings of the topological tensor products ``$\widehat{\otimes}_\th$" and ``$\widehat{\otimes}$" will be explained in Section \ref{hilt}.

One purpose of this paper is to clarify the following

\begin{introtheorem}\label{thm0}
The functors \eqref{fu1} and \eqref{fu2} are inverse to each other,
the functors \eqref{fu3} and \eqref{fu4} are inverse to each other,
and the diagrams
\[
  \begin{CD}
            \CH\mathbf{mod}_{G,\psi}@>(\cdot)_{\infty}>> \CJ\mathbf{mod}_{G,\psi}\\
            @V\Hom_H (\omega_{\psi}^{\th},\cdot) VV           @VV\Hom_H (\omega_{\psi},\cdot)V\\
           \CH\mathbf{mod}_{\widetilde{L},\textrm{gen}}@>(\cdot)_{\infty}>> \CJ\mathbf{mod}_{\widetilde{L},\textrm{gen}}
  \end{CD}
\]
and
\[
  \begin{CD}
            \CH\mathbf{mod}_{G,\psi}@>(\cdot)_{\infty}>> \CJ\mathbf{mod}_{G,\psi}\\
            @A\omega_{\psi}^\th\widehat{\otimes}_\th AA           @AA\omega_{\psi}\widehat{\otimes} A\\
           \CH\mathbf{mod}_{\widetilde{L},\textrm{gen}}@>(\cdot)_{\infty}>> \CJ\mathbf{mod}_{\widetilde{L},\textrm{gen}}
  \end{CD}
\] commute.
\end{introtheorem}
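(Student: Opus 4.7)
The plan is to deduce all four assertions from Stone--von Neumann applied to the Heisenberg-type group $H$, together with a Dixmier--Malliavin style factorization showing that the smoothing functor intertwines the Mackey equivalences. First I would establish the Hilbert equivalence between \eqref{fu1} and \eqref{fu2}; next verify that both squares commute at the level of objects; and finally deduce the smooth Fr\'echet equivalence between \eqref{fu3} and \eqref{fu4} formally from these.

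For the Hilbert equivalence, the key input is that by Stone--von Neumann $\omega_\psi^\th|_H$ is, up to unitary isomorphism, the unique irreducible unitary $\psi$-representation of $H$. Hence for every $V\in\CH\mathbf{mod}_{G,\psi}$ the natural evaluation map
\[
\omega_\psi^\th \,\widehat\otimes_\th\, \Hom_H(\omega_\psi^\th, V)\;\longrightarrow\; V
\]
is an isometric $H$-isomorphism, since both sides decompose as a Hilbert sum of copies of $\omega_\psi^\th$ with the same multiplicity space. To transport the $G$-action to the multiplicity space, let $\widetilde L$ act on $\Hom_H(\omega_\psi^\th,V)$ by $(\tilde\ell\cdot T)(v):=\tilde g\,T(\tilde g^{-1}v)$ for any lift $\tilde g\in\widetilde G$ of $\tilde\ell$; this is well-defined by $H$-equivariance of $T$, and it is genuine because the nontrivial kernel element of $\widetilde G\to G$ acts on $\omega_\psi^\th$ by $-1$. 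The unit and counit of this adjunction are then seen to be natural isomorphisms, giving the equivalence between \eqref{fu1} and \eqref{fu2}.

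The second step reduces the two squares to the natural identifications
\[
(\omega_\psi^\th\,\widehat\otimes_\th\, M)_\infty \;=\; \omega_\psi\,\widehat\otimes\, M_\infty
\qquad\text{and}\qquad
\Hom_H(\omega_\psi^\th,V)_\infty \;=\; \Hom_H(\omega_\psi,V_\infty)
\]
for $V\in\CH\mathbf{mod}_{G,\psi}$ and $M\in\CH\mathbf{mod}_{\widetilde L,\mathrm{gen}}$. For the first, Dixmier--Malliavin on $\widetilde G$ writes every smooth vector in the tensor product as a finite sum $\sum_i\omega_\psi^\th(\phi_i)\xi_i$ with $\phi_i\in C_c^\infty(\widetilde G)$; the smoothing operators $\omega_\psi^\th(\phi_i)$ land in $\omega_\psi\,\widehat\otimes\,M_\infty$, and the moderate growth condition is preserved. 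For the second, a continuous $H$-intertwiner $T\colon\omega_\psi^\th\to V$ is smooth for the $\widetilde L$-action on $\Hom_H(\omega_\psi^\th,V)$ if and only if $T$ restricts to a continuous $H$-map $\omega_\psi\to V_\infty$, which is verified by differentiating matrix coefficients along $\widetilde L$ and using that $\widetilde G$-smoothness on $\omega_\psi^\th$ already captures all of $H$.

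Once both squares commute, the smooth Fr\'echet equivalence between \eqref{fu3} and \eqref{fu4} is a formal consequence: applying the smoothing functor to the Hilbert equivalence gives a compatible pair of functors on the essential images of the smoothing functors, and a closure argument (combined with Corollary \ref{modg}) shows that these essential images exhaust $\CJ\mathbf{mod}_{G,\psi}$ and $\CJ\mathbf{mod}_{\widetilde L,\mathrm{gen}}$. The principal obstacle is the verification of the two smooth-vector identifications above: one must control the precise interplay between the Hilbert completion $\widehat\otimes_\th$ and the projective Fr\'echet tensor product $\widehat\otimes$, and ensure that moderate growth is preserved under these operations. Once these identifications are established, the rest is naturality of adjunctions and functoriality.
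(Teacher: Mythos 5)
Your overall reduction (Hilbert Stone--von Neumann equivalence plus the two smooth-vector identifications) is the right skeleton, but the central identification $(\omega_\psi^\rh\widehat\otimes_\rh M)_\infty=\omega_\psi\widehat\otimes M_\infty$ is not proved by your Dixmier--Malliavin step; it is assumed by it. Writing a $\widetilde G$-smooth vector as $\sum_i\pi(\phi_i)\xi_i$ with $\phi_i\in C_c^\infty(\widetilde G)$ only produces vectors smoothed in the $\widetilde G$-directions, and the assertion that each $\pi(\phi_i)\xi_i$ lies in $\omega_\psi\widehat\otimes M_\infty$ is precisely the hard inclusion to be established: a priori the $\widetilde G$-smooth vectors could be strictly larger than $\omega_\psi\widehat\otimes M_\infty$, which is the space of smooth vectors for the \emph{larger} group $\widetilde\oJ(\rw)\times\widetilde L$ in which $\widetilde G$ sits. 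The missing ingredient is the special property of the oscillator representation (Lemma \ref{dosc}): $\s\p(\rw)_\C$ acts on $(\omega_\psi^\rh)_\infty$ through the image of $\oU(\h_\C)$, so that by Lemmas \ref{subalge} and \ref{subsmooth} the $H$-smooth, $\widetilde G$-smooth and $\widetilde\oJ(\rw)\times\widetilde L$-smooth vectors coincide, \emph{with the same topology}; one then still needs Lemma \ref{hs} and Lemma \ref{tensor0} to identify the smooth vectors of the Hilbert tensor product with $\omega_\psi\widehat\otimes M_\infty$. Without some substitute for this input your claim is a restatement of the goal, and note also that the theorem requires an identification of topologies, which Dixmier--Malliavin does not address. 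Your sketch of the $\Hom$ identification suffers from the same gap (and is in any case formal once the tensor identity and the two category equivalences are in hand).

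Your final step is also flawed as a strategy: the essential images of the smoothing functors do \emph{not} exhaust $\CJ\mathbf{mod}_{G,\psi}$ and $\CJ\mathbf{mod}_{\widetilde L,\textrm{gen}}$, so the equivalence of \eqref{fu3} and \eqref{fu4} cannot be obtained by restricting the Hilbert equivalence to smoothings. Already when $G=H$ is a Heisenberg group (so $\widetilde L\cong\Z/2$ and smoothing for $\widetilde L$ is the identity functor), the representation $\omega_\psi\widehat\otimes F$ with $F$ a non-normable Fr\'echet space lies in $\CJ\mathbf{mod}_{G,\psi}$ but, by Proposition \ref{factor1} and Proposition \ref{factorh2}, is not isomorphic to the smoothing of any object of $\CH\mathbf{mod}_{G,\psi}$: smoothings of Hilbert objects are exactly the $\omega_\psi\widehat\otimes E$ with $E$ a Hilbert space, and applying $\Hom_H(\omega_\psi,\cdot)$ would force $F\cong E$. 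Hence the smooth Fr\'echet equivalence needs its own proof at the smooth level; the paper obtains it from du Cloux's smooth Stone--von Neumann theory (Proposition \ref{factor1}, via the Schwartz-algebra ideal of Lemma \ref{kernel}), and only then combines it with Proposition \ref{factor2} and the smooth-vector identification above. Your first step (the Hilbert equivalence and the transport of the genuine $\widetilde L$-action, together with Corollary \ref{modg}) is essentially the paper's argument and is fine.
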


More precisely, Theorem \ref{thm0} says that we have the following
natural identifications of representations:
\begin{eqnarray*}
                                           \omega_{\psi}^\th\widehat{\otimes}_\th \, \Hom_H(\omega_{\psi}^\th, V^\th) &=& V^\th, \\
                                           \Hom_H(\omega_{\psi}^\th,\omega_{\psi}^\th\widehat{\otimes}_\th E^\th) &=&   E^\th, \\
                                               \omega_{\psi}\widehat{\otimes} \, \Hom_H(\omega_{\psi}, V) &=& V, \\
                                           \Hom_H(\omega_{\psi},\omega_{\psi}\widehat{\otimes} E) &=&   E, \\
                                             \Hom_H(\omega_{\psi}^\th, V^\th)_\infty &=& \Hom_H(\omega_{\psi}, V^\th_\infty) , \\
                                           (\omega_{\psi}^\th\widehat{\otimes}_\th E^\th)_\infty &=& \omega_{\psi}\widehat{\otimes}
                                           E^\th_\infty,
                                            \end{eqnarray*}
for all representations $V^\th$ in $\CH\mathbf{mod}_{G,\psi}$,
$E^\th$ in $\CH\mathbf{mod}_{\widetilde{L},\textrm{gen}}$,  $V$ in
$\CJ\mathbf{mod}_{G,\psi}$, and $E$ in
$\CJ\mathbf{mod}_{\widetilde{L},\textrm{gen}}$. The first two
assertions of Theorem \ref{thm0} are in some sense well known
(Mackey's theory, cf. \cite[Proposition 4.3.9]{du91}). They are
rather direct consequences of the work of F. du Cloux (\cite{du88,
du89, du91}). The last assertion of Theorem \ref{thm0} generalizes
the expectation of R. Berndt in \cite[Page 185]{Be94}.

\vsp

We say that a representation of a reductive Nash group is a Casselman-Wallach representation if it is Fr\'{e}chet, smooth, of moderate growth, admissible and $\oZ$-finite. Here $\oZ$ is the center of the universal enveloping algebra of the complexified Lie algebra of the group. The reader may consult \cite{Cas}, \cite[Chapter 11]{Wa} and \cite{BK} for more details about Casselman-Wallach representations.

Denote by $\CF\CH_{\widetilde{L},\textrm{gen}}$ the category of
genuine Casselman-Wallach representations of $\widetilde{L}$. This is a full subcategory of $\CJ\mathbf{mod}_{\widetilde{L},\textrm{gen}}$. Denote by $\CF\CH_{G,\psi}$ the full subcategory of $\CJ\mathbf{mod}_{G,\psi}$ corresponding to  $\CF\CH_{\widetilde{L},\textrm{gen}}$, under the functors \eqref{fu3} and \eqref{fu4}. Objects of this category are called Casselman-Wallach $\psi$-representations of $G$. These representations should be useful in the study of Jacobi forms (cf. \cite{EZ}). It is important to note that the underlying spaces of all Casselman-Wallach $\psi$-representations are nuclear.

As an application of Theorem \ref{thm0}, we define matrix coefficients  for distributional vectors in Casselman-Wallach $\psi$-representations, as what follows. Denote by $\con^{\,\xi}(G)$ the space of tempered smooth functions
on $G$, and by $\con^{-\xi}(G)$ the locally convex space of tempered
generalized functions on $G$. The later contains the former as a
dense subspace (see Section \ref{functions} for precise definitions
of these spaces).

Let $U$ be a representation in $\CF\CH_{G,\psi}$ and let $V$ be a
representation in $\CF\CH_{G,\bar{\psi}}$ which are contragredient
to each other, namely, a non-degenerate $G$-invariant continuous
bilinear form
\[
  \la \,,\,\ra: U\times V\rightarrow \C
\]
is given. Note that every representation in $\CF\CH_{G,\psi}$ has a
unique contragredient representation in $\CF\CH_{G,\bar{\psi}}$
(Proposition \ref{dual}). Denote by $U^{-\infty}$ the strong dual of
$V$. It is a smooth representation of $G$ containing $U$ as a dense
subspace. Similarly, denote by $V^{-\infty}$ the strong dual of $U$.

For any $u\in U$, $v\in V$, the (usual) matrix coefficient
$c_{u\otimes v}$ is defined by
\begin{equation}\label{matrixo}
  c_{u\otimes v}(g):=\la gu,v\ra, \quad g\in G.
\end{equation}
It is a function in $\con^{\,\xi}(G)$. The following result is proved in \cite[Theorem 2.1]{SZ} for real reductive groups.

\begin{introcorollary}\label{thm1} With the notation as above, the
matrix coefficient map
\begin{equation}\label{umatrix}
 \begin{array}{rcl}
 U\times V&\rightarrow& \con^{\,\xi}(G),\\
           (u,v)&\mapsto & c_{u\otimes v}
 \end{array}
\end{equation}
extends to a continuous bilinear map
\[
  U^{-\infty}\times V^{-\infty}\rightarrow \con^{-\xi}(G),
\]
and the induced $G\times G$ intertwining linear map
\begin{equation}\label{mapc}
   c \,:\, U^{-\infty}\widehat \otimes V^{-\infty}\rightarrow \con^{-\xi}(G)
\end{equation}
is a topological homomorphism with closed image.
\end{introcorollary}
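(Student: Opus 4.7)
The approach is to reduce the statement to the analogous result \cite[Theorem 2.1]{SZ} for the real reductive group $\widetilde L$, using the categorical equivalence of Theorem \ref{thm0}.

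First, apply Theorem \ref{thm0} to write $U\simeq\omega_{\psi}\widehat{\otimes}E$ and $V\simeq\omega_{\bar\psi}\widehat{\otimes}F$ for genuine Casselman-Wallach representations $E,F\in\CF\CH_{\widetilde L,\mathrm{gen}}$. The given pairing $\la\,,\,\ra:U\times V\to\C$, combined with the canonical unitary pairing between $\omega_{\psi}$ and $\omega_{\bar\psi}$, descends to a non-degenerate $\widetilde L$-invariant continuous bilinear form on $E\times F$; thus $E$ and $F$ are contragredient Casselman-Wallach representations of $\widetilde L$. By \cite[Theorem 2.1]{SZ}, the matrix coefficient map $E^{-\infty}\widehat{\otimes}F^{-\infty}\to \con^{-\xi}(\widetilde L)$ is a topological homomorphism with closed image.

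Second, since all the representations involved are nuclear Fr\'echet, strong duality distributes over the completed tensor product:
\[
U^{-\infty}=\omega_{\psi}^{-\infty}\widehat{\otimes}E^{-\infty},\qquad V^{-\infty}=\omega_{\bar\psi}^{-\infty}\widehat{\otimes}F^{-\infty}.
\]
For pure tensors $u=\omega\otimes e$, $v=\omega'\otimes f$ and any lift $\tilde g\in\widetilde G$ of $g\in G$,
\[
c_{u\otimes v}(g)=\la\tilde g\omega,\omega'\ra\cdot\la\tilde g e,f\ra,
\]
the product being independent of the chosen lift because both factors are genuine. This exhibits the desired extended map $U^{-\infty}\widehat{\otimes}V^{-\infty}\to\con^{-\xi}(G)$ as the composition of the tensor product of two matrix coefficient maps (for the oscillator and for $(E,F)$ over $\widetilde L$), followed by pointwise multiplication on $\widetilde G$ and descent to $G$.

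Third, control the oscillator factor. In the Schr\"odinger model the smooth oscillator representation is realized on the Schwartz space of a Lagrangian $X$, with distributional vectors the tempered distributions on $X$. Stone--von Neumann analysis, together with the Weyl/Wigner transform, shows that oscillator matrix coefficients are Schwartz on $\widetilde G/Z$ with character $\psi$, and that the extended map from $\omega_{\psi}^{-\infty}\widehat{\otimes}\omega_{\bar\psi}^{-\infty}$ into the space of tempered generalized functions on $\widetilde G$ transforming by $\psi$ under $Z$ is again a topological homomorphism with closed image. Multiplying such a generalized $\psi$-function on $\widetilde G$ by a tempered generalized function pulled back from $\widetilde L$ produces a tempered generalized function on $\widetilde G$ which descends to an element of $\con^{-\xi}(G)$.

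The main obstacle lies in step three and in gluing the two pieces. One must identify $\con^{-\xi}(G)$ as an appropriate $\widetilde L$-equivariant completed tensor product of $\con^{-\xi}(\widetilde L)$ with the tempered generalized $\psi$-functions on $H/Z$, compatibly with matrix coefficients, via the Nash semidirect-product decomposition $\widetilde G/Z\simeq\widetilde L\ltimes(H/Z)$. Given this, the topological-homomorphism and closed-image properties follow from the analogous properties of the two factors, together with the general fact that in the nuclear Fr\'echet category the completed tensor product of two topological homomorphisms with closed image is again a topological homomorphism with closed image.
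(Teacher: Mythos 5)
Your argument has a genuine gap, and it sits exactly at the point you yourself flag as ``the main obstacle''; it is not a technicality that can be deferred. For distributional vectors, both factors in your factorization $c_{u\otimes v}(g)=\la\tilde g\omega,\omega'\ra\cdot\la\tilde g e,f\ra$ are honest tempered generalized functions (on $\widetilde G$ and on $\widetilde L$, respectively), and the pointwise product of a generalized function on $\widetilde G$ with the pullback of a generalized function along $\widetilde G\rightarrow\widetilde L$ is simply not defined in general: no wavefront or transversality control is available, and the coefficients of vectors in $\omega_\psi^{-\infty}$ are singular along the $\widetilde{\Sp}(\rw)$-directions as well, not only along $H$. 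Relatedly, your assertion that oscillator matrix coefficients are Schwartz on $\widetilde G/Z$ is false: for smooth vectors they are Schwartz along $H/Z$, but along the metaplectic factor they have only moderate decay. Consequently the proposed identification of $\con^{-\xi}(G)$ (or of its $\psi$-covariant part) as an $\widetilde L$-equivariant completed tensor product compatible with this multiplication is both unproven and built on an ill-defined operation; and the closing ``general fact'' about $\widehat\otimes$ of topological homomorphisms with closed image is invoked for strong duals of nuclear Fr\'{e}chet spaces (not Fr\'{e}chet spaces), where it is not a quotable standard statement.

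For comparison, the paper never factors the coefficients. It defines the extension abstractly via the action of Schwartz densities on distributional vectors (Section \ref{smatrix}, Lemma \ref{actl}), upgrades separate continuity to continuity by \cite[Theorem 41.1]{Tr}, and then dualizes: by \cite[Proposition 50.7]{Tr} the map $c$ is the transpose of a $G\times G$-intertwining continuous map $\RD^{\varsigma}(G)\rightarrow V\widehat\otimes U$, whose target is a Casselman--Wallach representation of the Jacobi group $G\times G$; Lemma \ref{autot} (which transports the Casselman--Wallach theorem through the equivalence of Theorem \ref{thm0}, using exactness of $\omega_\psi\widehat\otimes\,\cdot$) shows that this transposed map is a topological homomorphism with closed image, and Lemma \ref{top2} transfers the property back to $c$. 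If you want to keep your reduction to $\widetilde L$ and to \cite[Theorem 2.1]{SZ}, it has to be routed through this transposed, smooth-category formulation rather than through products of distributions.
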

Recall that a linear map $\phi: E\rightarrow F$ of locally convex spaces is called a topological homomorphism if the induced
linear isomorphism $E/\Ker(\phi)\rightarrow \Im (\phi)$ is a
homeomorphism, where $E/\Ker(\phi)$ is equipped with the quotient
topology of $E$, and the image $\Im(\phi)$ is equipped with the
subspace topology of $F$. The action of $G\times G$ on
$\con^{-\xi}(G)$ is given by
\[
  ((g_1,g_2).f)(x):=f(g_2^{-1}xg_1).
\]

In particular, Corollary \ref{thm1} defines characters of Casselman-Wallach $\psi$-representations (as tempered generalized functions on $G$). It also implies that irreducible Casselman-Wallach $\psi$-representations are determined by their characters (\cite[Remark 2.2]{SZ}).

\vsp The following form of Gelfand-Kazhdan criteria is a rather direct consequence of
Corollary \ref{thm1}. See \cite[Theorem 2.3]{SZ} for a proof.

\begin{introcorollary}\label{gelfand} Let $S_1$ and $S_2$ be two closed subgroups of the real Jacobi group $G$, with continuous (non-necessarily unitary) characters
\[
  \chi_i: S_i\rightarrow \BC^\times,\quad i=1,2.
\]
Assume that there is a Nash anti-automorphism $\sigma$ of $G$
such that for every $f\in \con^{-\xi}(G)$ which is an eigenvector of
$\operatorname{U}(\gC)^G$, the conditions
\[
\left\{
       \begin{array}{l}
           f(zx)=\psi(z)f(x),\quad z\in Z, \medskip\\
            f(sx)=\chi_{1}(s)f(x), \quad s\in S_1, \textrm{ and}\medskip\\
           f(xs)=\chi_{2}(s)^{-1}f(x), \quad s\in S_2
       \end{array}
\right.
\]
imply that
\[
   f(x^\sigma)=f(x).
\]
Then for every pair of irreducible representations $U$ in
$\CF\CH_{G,\psi}$ and $V$ in $\CF\CH_{G,\bar \psi}$ which are
contragredient to each other, one has that
\begin{equation*}\label{dhom}
  \dim \Hom_{S_1}(U, \chi_{1}) \, \dim \Hom_{S_2}
  (V,\chi_{2})\leq 1.
\end{equation*}

\end{introcorollary}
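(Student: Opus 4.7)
The plan is to follow the template of the reductive case treated in \cite[Theorem 2.3]{SZ}, with Corollary \ref{thm1} of the present paper replacing its reductive analog (\cite[Theorem 2.1]{SZ}) as the sole analytic input. The argument is otherwise essentially the classical Gelfand-Kazhdan one.

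Given nonzero $\ell_1 \in \Hom_{S_1}(U, \chi_1)$ and $\ell_2 \in \Hom_{S_2}(V, \chi_2)$, I view them as distributional vectors $\ell_1 \in V^{-\infty}$ and $\ell_2 \in U^{-\infty}$ via the contragredient pairing, and apply the matrix coefficient map from Corollary \ref{thm1} to form $f := c(\ell_2 \otimes \ell_1) \in \con^{-\xi}(G)$. Using the $G \times G$-equivariance of $c$ and the fact that $Z$ acts on $U^{-\infty}$ by $\psi$ (since it acts on $V$ by $\bar\psi$), a direct computation will show that $f$ satisfies each of the three bi-equivariance conditions of the hypothesis. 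For the eigenvector condition: $U$ is an irreducible Casselman-Wallach $\psi$-representation, so by Theorem \ref{thm0} it corresponds to an irreducible Casselman-Wallach representation of the reductive group $\widetilde L$, which admits an infinitesimal character; hence each element of $\operatorname{U}(\gC)^G$ acts on $U$ (and on $U^{-\infty}$) by a scalar, making $f$ an eigenvector of $\operatorname{U}(\gC)^G$. The hypothesis then yields $f(x^\sigma) = f(x)$.

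To extract the multiplicity bound, I argue by contradiction: assume $\dim \Hom_{S_1}(U, \chi_1) \cdot \dim \Hom_{S_2}(V, \chi_2) \geq 2$ and, without loss of generality, take linearly independent $\ell_1, \ell_1' \in \Hom_{S_1}(U, \chi_1)$ together with nonzero $\ell_2 \in \Hom_{S_2}(V, \chi_2)$. The anti-automorphism $\sigma$ converts a matrix coefficient of the pair $(U, V)$ into a matrix coefficient of the $\sigma$-twisted pair, and the $\sigma$-symmetry of $c(\ell_2 \otimes \ell_1)$ and $c(\ell_2 \otimes \ell_1')$ descends, via the topological-homomorphism property of $c$, to a nontrivial intertwining operator between the $\sigma$-twist and the original. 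Irreducibility together with Schur's lemma then force such an operator to be unique up to scalar, producing a linear relation between $\ell_1$ and $\ell_1'$, contradicting their independence.

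The main obstacle is precisely this last step: cleanly translating the $\sigma$-symmetry of distributional matrix coefficients into a linear relation between the functionals $\ell_1$ and $\ell_1'$. This requires careful use of the closed-image and topological-homomorphism assertions of Corollary \ref{thm1}, so that one may pass to the quotient $\bigl(U^{-\infty} \widehat\otimes V^{-\infty}\bigr)/\ker c$ and identify the $\sigma$-invariant matrix coefficients with an honest Hom-space. Once these technicalities are handled, no new ideas beyond those in \cite{SZ} are needed; the Jacobi-group case is then reduced to the same formal argument.
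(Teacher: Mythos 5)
Your proposal follows exactly the route the paper intends: the paper offers no independent argument but states that the corollary is a direct consequence of Corollary \ref{thm1} via the proof of \cite[Theorem 2.3]{SZ}, which is precisely the Gelfand--Kazhdan template you describe (form the generalized matrix coefficient $c_{\ell_2\otimes\ell_1}$, verify the equivariance and eigenvector conditions, invoke the $\sigma$-invariance hypothesis, and use the closed-image/topological-homomorphism property of $c$ to turn the resulting symmetry into a proportionality of the functionals). One small touch-up: the scalar action of $\operatorname{U}(\gC)^G$ on $U$ is better justified by Schur's lemma through the equivalence with irreducible Casselman--Wallach representations of $\widetilde L$ (elements of $\operatorname{U}(\gC)^G$ give $G$-endomorphisms and $\End_G(U)=\C$), rather than by appealing to the infinitesimal character of $\widetilde L$ itself.
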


Here and henceforth, a subscript ``$\C$" indicates the complexification of a real Lie algebra, the universal enveloping algebra  $\operatorname{U}(\gC)$  is identified with the algebra of left invariant differential operators on $G$, and
$\operatorname{U}(\gC)^G$  is identified with the algebra of
bi-invariant differential operators on $G$.

\vsp

Another purpose of this paper is to have the following criterion for a strong Gelfand pair, which is used in \cite{SZ2} to prove the multiplicity one theorem for Fourier-Jacobi models. As in the proof of \cite[Corollary 2.5]{SZ}, the criterion is implied by Corollary \ref{gelfand}. We shall not go to the details. 

\begin{introcorollary}\label{strongg}
Let $G'$ be a Nash subgroup of $G$ which is also a real Jacobi grouop. Fix a generic unitary character $\psi'$ of the unipotent center $Z'$ of $G'$. Assume that there exists a Nash anti-automorphism $\sigma$ of $G$ preserving $G'$ with the following property: every tempered generalized function on $G$ which is invariant under the adjoint action of $G'$ is automatically $\sigma$-invariant. Then for every irreducible Casselman-Wallach $\psi$-representation $U$ of $G$, and every irreducible Casselman-Wallach $\psi'$-representation $U'$ of $G'$, the space of $G'$-invariant continuous bilinear functionals on $U\times U'$ is at most one dimensional.

\end{introcorollary}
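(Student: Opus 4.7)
The plan is to apply the Gelfand-Kazhdan criterion (Corollary \ref{gelfand}) to the product group $\mathbf{G}:=G\times G'$, taking $S_1=S_2:=\Delta(G')$, trivial characters $\chi_1=\chi_2:=\mathbf{1}$, and the anti-automorphism $\tilde\sigma(g,g'):=(\sigma(g),\sigma(g'))$. First I would verify that $\mathbf{G}$ is itself a real Jacobi group with unipotent radical $H\times H'$, unipotent center $Z\times Z'$, and generic character $\psi\boxtimes\psi'$, since the alternating form coming from $\psi\boxtimes\psi'$ on $(\h\oplus\h')/(\z\oplus\z')\cong\h/\z\oplus\h'/\z'$ is the orthogonal direct sum of two non-degenerate forms. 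The Nash anti-automorphism $\tilde\sigma$ preserves $\Delta(G')$ because $\sigma$ preserves $G'$.

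The core step is to verify the hypothesis of Corollary \ref{gelfand} for this product setup: every tempered generalized function $f$ on $\mathbf{G}$ that is an eigenvector of $\operatorname{U}((\g\oplus\g')_{\C})^{\mathbf{G}}$, transforms by $\psi\boxtimes\psi'$ under the left $Z\times Z'$-action, and is bi-$\Delta(G')$-invariant, must satisfy $f\circ\tilde\sigma=f$. The main tool will be the Nash diffeomorphism
\[
  \Phi:\mathbf{G}\to\mathbf{G},\qquad (g,g')\mapsto(g^{-1}g',\,g').
\]
In the new coordinates $(X,Y):=\Phi(g,g')$ (inverted by $g=YX^{-1}$, $g'=Y$), one checks that left multiplication by $(s,s)\in\Delta(G')$ acts as $(X,Y)\mapsto(X,sY)$, right multiplication by $(s,s)$ acts as $(X,Y)\mapsto(s^{-1}Xs,\,Ys)$, and $\tilde\sigma$ transports to $(X,Y)\mapsto(\sigma(Y)^{-1}\sigma(X)\sigma(Y),\,\sigma(Y))$. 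Consequently $F:=f\circ\Phi^{-1}$ is left-$G'$-invariant in $Y$, hence depends only on $X$; writing $F(X,Y)=\bar F(X)$, the right $\Delta(G')$-invariance forces $\bar F(s^{-1}Xs)=\bar F(X)$, so $\bar F$ is a tempered $\operatorname{Ad}(G')$-invariant generalized function on $G$. By hypothesis $\bar F\circ\sigma=\bar F$, and combining this with the $\operatorname{Ad}(G')$-invariance applied at the element $\sigma(Y)\in G'$ gives
\[
  (f\circ\tilde\sigma)(g,g')=\bar F\bigl(\sigma(Y)^{-1}\sigma(X)\sigma(Y)\bigr)=\bar F(\sigma(X))=\bar F(X)=f(g,g'),
\]
as required.

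Applying Corollary \ref{gelfand} to the irreducible Casselman-Wallach representation $U\boxtimes U'$ of $\mathbf{G}$ and its contragredient $U^{\vee}\boxtimes(U')^{\vee}$ will then yield
\[
  \dim\Hom_{G'}(U|_{G'}\otimes U',\C)\cdot\dim\Hom_{G'}(U^{\vee}|_{G'}\otimes (U')^{\vee},\C)\leq 1.
\]
A non-zero $G'$-invariant continuous bilinear form on $U\times U'$ induces, via the contragredient equivalence on Casselman-Wallach representations of $G'$, a non-zero such form on $U^{\vee}\times(U')^{\vee}$, and conversely; the two factors are therefore equal, so each is at most one, which is the claim. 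The hardest part will be the change-of-variables step: carefully confirming that the $Z\times Z'$-central character condition together with the bi-$\Delta(G')$-invariance really do reduce $f$ to a bona fide tempered $\operatorname{Ad}(G')$-invariant generalized function $\bar F$ on $G$ (rather than a section of a twisted line bundle over a quotient), since $Z'$ need not be central in $G$; once this bookkeeping is in hand, the argument mirrors the reductive case treated in \cite[Corollary 2.5]{SZ}.
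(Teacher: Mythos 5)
Your overall strategy is the one the paper intends (it simply defers to the proof of \cite[Corollary 2.5]{SZ}): view $\mathbf{G}=G\times G'$ as a real Jacobi group with unipotent center $Z\times Z'$ and generic character $\psi\otimes\psi'$, apply Corollary \ref{gelfand} with $S_1=S_2=\Delta(G')$, trivial characters and $\tilde\sigma=\sigma\times\sigma|_{G'}$, and verify the distributional hypothesis by the unfolding $(g,g')\mapsto (g^{-1}g',g')$. That verification is sound as you sketch it; note that the central character condition and the $\operatorname{U}(\gC)^{\mathbf G}$-eigenvector condition are never used in the unfolding, so the ``$Z'$ bookkeeping'' you single out as the hardest point is in fact harmless.

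The genuine gap is in your last step. Corollary \ref{gelfand} yields only the product inequality
\[
\dim \Hom_{\Delta(G')}(U\widehat\otimes U',\C)\cdot \dim \Hom_{\Delta(G')}(U^{\vee}\widehat\otimes (U')^{\vee},\C)\leq 1,
\]
and you pass to the clean bound by asserting that a nonzero invariant form on $U\times U'$ produces one on $U^{\vee}\times (U')^{\vee}$ ``via the contragredient equivalence on Casselman--Wallach representations of $G'$''. This does not work as stated: $U\widehat\otimes U'$ is not a Casselman--Wallach representation of $G'$ (the restriction $U|_{G'}$ is far from admissible), so neither Proposition \ref{dual} nor any contragredient functor on the CW category applies to it, and there is no formal isomorphism between $\Hom_{G'}(U\widehat\otimes U',\C)$ and $\Hom_{G'}(U^{\vee}\widehat\otimes (U')^{\vee},\C)$ --- abstract dualization of an invariant form merely returns an invariant form on the same pair with the two factors exchanged. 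The usual way to close this gap uses the hypothesis a second time: $\operatorname{Ad}(G)$-invariant eigendistributions, in particular the characters provided by Corollary \ref{thm1}, are $\operatorname{Ad}(G')$-invariant, hence $\sigma$-invariant, and since irreducible Casselman--Wallach $\psi$-representations are determined by their characters this gives $U^{\vee}\cong U\circ\tau$ with $\tau=\sigma\circ(\,\cdot\,)^{-1}$; twisting an invariant form by $\tau$ in both variables (legitimate because $\sigma$ preserves $G'$) then turns forms on $U\times U'$ into forms on $(U\circ\tau)\times(U'\circ\tau|_{G'})$. But identifying the latter with forms on $U^{\vee}\times(U')^{\vee}$ still requires $(U')^{\vee}\cong U'\circ\tau|_{G'}$, i.e.\ a statement about distributions (characters) on $G'$, which does not follow from the stated hypothesis about distributions on $G$ and must be supplied by a separate MVW-type argument. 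Until such a transfer is established, your argument proves only the product inequality, not the one-dimensionality claimed in the corollary.
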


\vsp

\noindent Acknowledgements: This paper is an outcome of discussions
between Avraham Aizenbud and the author. In particular, the author
learned Lemma \ref{tensor0} and Lemma \ref{subalge} from him. The
author is very grateful to him. The work was partially supported by
NSFC grants 10801126 and 10931006.

\section{Preliminaries}\label{pre}

In this section, we fix some notations and terminologies and recall
some general results which will be used later in this paper.

\subsection{Topological tensor products}\label{hilt}
Let $E$ and $F$ be two locally convex spaces. There are at least
four useful locally convex topologies one can put on the algebraic
tensor product $E\otimes F$, namely, the inductive tensor product
$E\otimes_{\textrm{i}}F$, the projective tensor product
$E\otimes_{\pi}F$, the epsilon tensor product $E\otimes_{\epsilon}F$
and the Hilbert tensor product $E\otimes_{\textrm{h}}F$. As usual,
their completions are denoted by $E\widehat \otimes_{\textrm{i}}F$,
$E\widehat \otimes_{\pi}F$, $E\widehat \otimes_{\epsilon}F$ and
$E\widehat \otimes_{\textrm{h}}F$, respectively. The inductive
tensor product plays no role in this paper. The projective tensor
product and the epsilon tensor product are more commonly used and
the reader is referred to \cite{Th01} for a concise treatment. A
fundamental theorem of Grothedieck says that they coincide when
either $E$ or $F$ is nuclear. If this is the case, we simply write
\[
  E\widehat \otimes F:=E\widehat \otimes_{\pi}F=E\widehat
  \otimes_{\epsilon}F.
\]

We are more concerned with the Hilbert tensor product. Its topology
is defined by the family
\[
  \{\,\la \,,\,\ra_\mu\otimes \la \,,\,\ra_\nu\,\}
\]
of non-negative Hermitian forms on $E\otimes F$, where $\la
\,,\,\ra_\mu$ and $\la \,,\,\ra_\nu$ runs through all non-negative
continuous Hermitian forms on $E$ and $F$, respectively.

Recall that a locally convex space is said to be Hilbertizable if
its topology is defined by a family of non-negative Hermitian forms
on it. The following fact is well known.

\begin{lem}\label{tensor0}
If $E$ is nuclear and $F$ is Hilbertizable, then
\[
  E\widehat \otimes_{\emph{h}}F=E\widehat \otimes F
\]
as locally convex spaces.
\end{lem}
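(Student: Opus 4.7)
The plan is to show that on the algebraic tensor product $E\otimes F$ the Hilbert tensor topology and the projective tensor topology coincide; since $E$ is nuclear, the projective topology already completes to $E\widehat\otimes F$ by the Grothendieck theorem cited above, so equality of the two topologies on $E\otimes F$ gives the stated identification of completions.

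Both topologies are generated by seminorms of the form $p\otimes q$ with $p$ on $E$ and $q$ on $F$ Hermitian: $F$ is Hilbertizable by hypothesis, and a nuclear space admits a fundamental system of Hermitian seminorms. The easy direction is $p\otimes_{\mathrm{h}} q\le p\otimes_\pi q$, reflecting the standard fact that the Hilbert-Schmidt norm of a finite-rank operator between Hilbert spaces is at most its trace norm; this already yields that the Hilbert tensor topology is no finer than the projective one.

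For the opposite comparison I would invoke the Hilbertian form of nuclearity: given any continuous Hermitian seminorm $p$ on $E$, there is a continuous Hermitian seminorm $p'\ge p$ on $E$ such that the canonical map $T\colon\widehat E_{p'}\to\widehat E_p$ between the associated Hilbert completions is Hilbert-Schmidt. The key estimate is then that for every Hilbert space $H$ the induced map $T\otimes\operatorname{id}\colon\widehat E_{p'}\widehat\otimes_{\mathrm{h}} H\to \widehat E_p\widehat\otimes_\pi H$ has operator norm at most $\|T\|_{\mathrm{HS}}$. This I would prove by writing out a singular value decomposition $T(\,\cdot\,)=\sum_n\lambda_n\langle\cdot,e_n\rangle f_n$ with $\{e_n\}$ an orthonormal basis of $\widehat E_{p'}$ and $\sum_n\lambda_n^2=\|T\|_{\mathrm{HS}}^2<\infty$, expanding any $x=\sum_n e_n\otimes g_n$, and estimating
\[
   \bigl\|(T\otimes\operatorname{id})(x)\bigr\|_\pi=\Bigl\|\sum_n\lambda_n f_n\otimes g_n\Bigr\|_\pi\le\sum_n|\lambda_n|\,\|g_n\|\le \|T\|_{\mathrm{HS}}\,\|x\|_{\mathrm{h}}
\]
by Cauchy-Schwarz. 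Specializing $H=\widehat F_q$ gives $p\otimes_\pi q\le\|T\|_{\mathrm{HS}}\,(p'\otimes_{\mathrm{h}} q)$ on $E\otimes F$, showing that the projective tensor topology is also coarser than the Hilbert tensor one.

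Combining the two comparisons one gets $E\otimes_{\mathrm{h}} F=E\otimes_\pi F$ as locally convex spaces; passing to completions yields $E\widehat\otimes_{\mathrm{h}} F=E\widehat\otimes F$. The main obstacle in my view is not any of the individual inequalities but the bookkeeping at the interface: one must invoke the Hermitian (Hilbert-Schmidt) version of the factorization property of nuclear spaces rather than the generic nuclear-factorization form, since the decisive bound $\|T\otimes\operatorname{id}\|\le\|T\|_{\mathrm{HS}}$ lands in the projective — not the Hilbert — tensor product, and this is exactly what forces the two topologies to coincide.
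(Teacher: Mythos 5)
Your proof is correct, but it takes a different route from the paper. The paper's (sketched) argument never estimates anything: it sandwiches the Hilbert topology between the epsilon and projective topologies --- in general $\otimes_{\mathrm h}$ is coarser than $\otimes_\pi$, and when both factors are Hilbertizable $\otimes_\epsilon$ is coarser than $\otimes_{\mathrm h}$ --- and then collapses the sandwich by Grothendieck's theorem ($\epsilon=\pi$ for $E$ nuclear) together with the fact that nuclear spaces are Hilbertizable. You instead avoid the epsilon topology entirely and prove the reverse comparison $\pi\preceq\mathrm h$ directly, via the Hilbert--Schmidt characterization of nuclearity (every continuous Hermitian seminorm $p$ is dominated by a Hermitian $p'$ with Hilbert--Schmidt linking map) and the quantitative bound $\|T\otimes\operatorname{id}\colon \widehat E_{p'}\widehat\otimes_{\mathrm h}H\to\widehat E_p\widehat\otimes_\pi H\|\le\|T\|_{\mathrm{HS}}$, which is exactly the right inequality and whose Cauchy--Schwarz proof is fine (one only needs the routine facts that $p\otimes_\pi q$ on $E\otimes F$ equals the projective norm of the image in the local Hilbert spaces, and likewise for the Hilbert seminorm). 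What each approach buys: the paper's squeeze is softer and reuses results it already cites (Grothendieck's theorem and \cite[Corollary 3.19]{Th01}), at the cost of introducing the epsilon topology as an auxiliary object; your argument is self-contained and quantitative, but it leans on the Hilbert--Schmidt form of nuclearity, which is a theorem of comparable depth to the one the paper quotes, so neither route is strictly more elementary.
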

\begin{proof}
We indicate the idea of the proof for the convenience of the reader.
In general, the Hilbert topology is coarser than the projective
topology. If both $E$ and $F$ are Hilbertizable, then the epsilon
topology is coarser than the Hilbert topology. The lemma then
follows by Grothedieck's theorem and the well know fact that every
nuclear locally convex space is Hilbertizable (cf. \cite[Corallary
3.19]{Th01}).
\end{proof}

\subsection{Smoothing representations}\label{smoothing}
In this subsection, $G$ is an arbitrary Lie group. Let $V$ be a
representation of $G$ (recall from the Introduction that every
representation space is assumed to be complete). It is said to be
smooth if the action map $G\times V\rightarrow V$ is smooth as a map
of infinite dimensional manifolds. The notion of smooth maps in
infinite dimensional setting may be found in \cite{GN}, for example.
In general, denote by $\con(G;V)$ the space of continuous functions
on $G$ with values in $V$. It is a complete locally convex space
under the usual topology of uniform convergence on compact sets, and
is a representation of $G$ under right translations:
\[
  (g.f)(x):=f(xg),\quad g,x\in G, \,f\in\con(G;V).
\]
Similarly, smooth $V$-valued functions form a complete locally
convex space $\con^{\,\infty}(G;V)$ under the usual smooth topology,
and is a smooth representation of $G$ under right translations.

Write
\[
  \gamma_v(g):=g.v, \quad v\in V,\,g\in G.
\]
The smoothing $V_\infty$ of $V$ is defined to be the representation
which fits to a cartesian diagram
\[
  \begin{CD}
           V_{\infty}@>>> \con^{\,\infty}(G;V)\\
            @VVV           @VVV\\
           V@>v\mapsto \gamma_v>>\con(G;V)
  \end{CD}
\]
in the category of representations of $G$. This is a smooth
representation of $G$. As a vector space, it consists of all $v\in
V$ such that $\gamma_v\in \con^{\,\infty}(G;V)$. The topology of
$V_\infty$ coincides with the subspace topology of
$\con^{\,\infty}(G;V)$. The universal enveloping algebra
$\oU(\g_\C)$ acts on $V_\infty$ as continuous linear operators by
\[
  X.v:=\textrm{the value of $X(\gamma_v)$ at the identity element $1\in
  G$}.
\]
The topology of $V_\infty$ is determined by the family
\[
  \{\abs{\,\cdot\,}_{X,\lambda}\}_{X\in \oU(\g_\C), \,\lambda\in
  \Lambda}
\]
of seminorms, where $\Lambda$ is the set of all continuous seminorms
on $V$, and
\[
  \abs{v}_{X,\lambda}:=\abs{X.v}_\lambda.
\]

The smoothing is clearly a functor from the category of
representations of $G$ to the category of smooth representations of
$G$. When $V$ itself is smooth, we have that $V=V_\infty$ as
representations of $G$. In general, $V_\infty$ is a dense subspace
of $V$.

\begin{lem}\label{densesmooth}
Let $V_0$ be a $G$-stable subspace of $V_\infty$. If it is dense in
$V$, then it is also dense in $V_\infty$.
\end{lem}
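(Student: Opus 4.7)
The plan is to show that $W := \overline{V_0}^{\,V_\infty}$, the closure of $V_0$ in $V_\infty$, equals $V_\infty$. The idea is to smooth by convolution against a Dirac sequence of test functions on $G$, reducing density in $V_\infty$ to density in $V$.

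First I would verify the basic structural properties of $W$. Since $V_\infty$ is a smooth $G$-representation, the action map $G\times V_\infty\to V_\infty$ is (at least) continuous, so $W$ is $G$-stable. Being a closed subspace of the complete space $V_\infty$, $W$ is itself complete. Next, for each $f\in\con^{\,\infty}_c(G)$ and $w\in W$, the map $g\mapsto f(g)\,g.w$ is a compactly supported continuous function $G\to V_\infty$ taking values in $W$ (by $G$-stability). Its Riemann integral $\pi(f)w:=\int_G f(g)\,g.w\,dg$ is therefore an element of $W$, since Riemann sums lie in the linear subspace $W$ and converge in $V_\infty$, which is closed. Thus $W$ is stable under convolution by $\con^{\,\infty}_c(G)$.

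Second, I would record that for every $f\in \con^{\,\infty}_c(G)$ the operator $\pi(f):V\to V_\infty$ is continuous. This is the standard estimate: for any $X\in \oU(\g_\C)$ one has $X.(\pi(f)v)=\pi(\hat Xf)v$ with $\hat Xf\in\con^{\,\infty}_c(G)$, so
\[
   \abs{\pi(f)v}_{X,\lambda}
   \;\leq\; \int_G \abs{\hat Xf(g)}\,\abs{g.v}_\lambda\,dg
   \;\leq\; C\,\abs{v}_{\lambda'}
\]
for a suitable continuous seminorm $\lambda'$ on $V$, by joint continuity of the $G$-action on $V$ restricted to $\supp(\hat Xf)$.

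Now for the main argument: fix $v\in V_\infty$. Since $V_0$ is dense in $V$, pick $v_n\in V_0$ with $v_n\to v$ in $V$. For any $f\in \con^{\,\infty}_c(G)$, continuity of $\pi(f):V\to V_\infty$ gives $\pi(f)v_n\to \pi(f)v$ in $V_\infty$, and each $\pi(f)v_n$ lies in $W$ by the first paragraph. Since $W$ is closed in $V_\infty$, $\pi(f)v\in W$ for every $f\in\con^{\,\infty}_c(G)$. Finally, choose a Dirac sequence $f_k\in \con^{\,\infty}_c(G)$; the standard fact that $\pi(f_k)v\to v$ in $V_\infty$ when $v$ is smooth (which follows from $X.\pi(f_k)v=\pi(f_k)(X.v)\to X.v$ in $V$, using $v\in V_\infty$) forces $v\in W$.

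The only place requiring care is justifying that the integral defining $\pi(f)w$ for $w\in W$ actually lies in $W$ and not merely in its closure in $V$; this is handled by treating the integral as a Riemann integral directly in $V_\infty$, which is legitimate because $g\mapsto f(g)\,g.w$ is continuous into $V_\infty$ for $w\in V_\infty$. Everything else is routine once the continuity of $\pi(f):V\to V_\infty$ and the smoothing property $\pi(f_k)v\to v$ in $V_\infty$ are in place.
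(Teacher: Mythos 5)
Your argument is essentially the paper's: convolve with test functions, use that $f\mapsto\pi(f)$ maps $V$ continuously into $V_\infty$ and that $\pi(f)V_0$ stays in the closure of $V_0$ inside $V_\infty$ (Riemann sums), then conclude via approximation of smooth vectors by such convolutions — the paper phrases this last step as density of the Garding subspace $\sum_f f.V$ in $V_\infty$. The one flaw is the parenthetical identity $X.\pi(f_k)v=\pi(f_k)(X.v)$: it is false in general, since differentiating the left action produces the left-translation derivative of the test function, $X.\pi(f)v=\pi(\hat Xf)v$ (exactly the identity you used, correctly, in your continuity estimate), not convolution against $X.v$. The fact you want, $\pi(f_k)v\to v$ in $V_\infty$ for $v\in V_\infty$, is nevertheless true and standard; to repair the justification either integrate by parts, $X.\pi(f_k)v=\pi(\hat Xf_k)v=\int_G f_k(g)\,X.(g.v)\,dg\to X.v$ in $V$ because $g\mapsto X.(g.v)$ is continuous, or, more simply, note that $g\mapsto g.v$ is continuous into $V_\infty$ (the $V_\infty$-action is smooth), so the usual Dirac-sequence estimate $p(\pi(f_k)v-v)\le \sup_{g\in \mathrm{supp}\,f_k}p(g.v-v)$ can be run directly with seminorms $p$ of $V_\infty$. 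With that one-line fix your proof is complete and coincides in substance with the paper's.
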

\begin{proof}Fix a left invariant Haar measure $dg$ on $G$. Let
$f\in \con^\infty_0(G)$ (a smooth function with compact support).
For all $v\in V$, write
\[
  f.v:=\int_G f(g)g.v\,dg.
\]
It is well know and also easy to see that $f.v\in V_\infty$ and the
linear map
\[
   V\rightarrow V_\infty,\quad v\mapsto f.v
\]
is continuous. Then the denseness of $V_0$ in $V$ implies the
denseness of $f.V_0$ in $f.V$, under the topology of $V_\infty$.
Consequently, under the topology of $V_\infty$,
\begin{equation}\label{densee}
  \sum_{f\in \con^\infty_0(G)} f.V_0\quad\textrm{is dense in}\quad \sum_{f\in \con^\infty_0(G)}
  f.V.
\end{equation}
Then the lemma follows by noting that the closure of $V_0$ (within
$V_\infty$) contains the first space of \eqref{densee}, and that the
second space of \eqref{densee} (the Garding subspace) is dense in
$V_\infty$.

\end{proof}

For every closed subgroup $S$ of $G$, write $V|_S:=V$, viewed as a
representation of $S$.
\begin{lem}\label{subsmooth}
Let $S$ and $S'$ be two closed subgroups of $G$. If the actions of
$\oU(\s_\C)$ and $\oU(\s'_\C)$ produce the same subalgebra of
$\End(V_\infty)$, then
\[
  (V|_S)_\infty=(V|_{S'})_\infty
\]
as locally convex spaces.
\end{lem}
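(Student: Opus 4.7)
The plan is to reduce the equality of the two smoothings to an equality of topologies on their common dense subspace $V_{\infty}$, and then extend by completeness.

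First I would observe that because smoothness for the $G$-action implies smoothness for the action of any closed subgroup, we have inclusions $V_{\infty}\subseteq (V|_S)_{\infty}$ and $V_{\infty}\subseteq (V|_{S'})_{\infty}$. Since $V_{\infty}$ is $G$-stable (in particular $S$- and $S'$-stable) and is dense in $V$, Lemma \ref{densesmooth}, applied in turn to the Lie groups $S$ and $S'$ and the representations $(V|_S)_{\infty}$ and $(V|_{S'})_{\infty}$, gives that $V_{\infty}$ is dense in each of $(V|_S)_{\infty}$ and $(V|_{S'})_{\infty}$.

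Next, I would compare the two topologies induced on $V_{\infty}$. Recall from the paragraph preceding Lemma \ref{densesmooth} that the topology of $(V|_S)_{\infty}$ is defined by the seminorms $v\mapsto \abs{X.v}_{\lambda}$ with $X\in \oU(\s_{\C})$ and $\lambda$ a continuous seminorm on $V$, and analogously for $S'$. On the subspace $V_{\infty}$, the value $\abs{X.v}_{\lambda}$ depends only on the operator $T_X\in \End(V_{\infty})$ induced by $X$. The hypothesis that $\oU(\s_{\C})$ and $\oU(\s'_{\C})$ produce the same subalgebra of $\End(V_{\infty})$ therefore implies that the two families of seminorms define the same topology on $V_{\infty}$.

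Finally I would extend from $V_{\infty}$ to its completions. Representations are complete by convention, so $(V|_S)_{\infty}$ and $(V|_{S'})_{\infty}$ are complete locally convex spaces, each containing $V_{\infty}$ as a dense subspace with the same induced topology. The identity $V_{\infty}\to V_{\infty}$ therefore extends uniquely to a continuous linear map $\tilde{\iota}\colon (V|_S)_{\infty}\to (V|_{S'})_{\infty}$. Composing with the continuous inclusion $(V|_{S'})_{\infty}\hookrightarrow V$ yields a continuous map $(V|_S)_{\infty}\to V$ that agrees on the dense subspace $V_{\infty}$ with the inclusion $(V|_S)_{\infty}\hookrightarrow V$; since $V$ is Hausdorff, the two maps coincide, so $\tilde{\iota}$ is the set-theoretic inclusion. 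Swapping the roles of $S$ and $S'$ gives the reverse inclusion and the continuity of its inverse, proving the equality as locally convex spaces.

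The only mildly delicate point, and the one I would write out carefully, is the last step: one has to argue not merely that $(V|_S)_{\infty}$ and $(V|_{S'})_{\infty}$ are abstractly isomorphic completions of $V_{\infty}$, but that they literally coincide as subspaces of $V$ with coinciding topologies. This is handled by the Hausdorff-extension uniqueness argument above, using that the inclusions of both smoothings into $V$ are continuous.
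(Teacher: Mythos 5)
Your proof is correct and follows essentially the same route as the paper: apply Lemma \ref{densesmooth} to $V|_S$ and $V|_{S'}$ to see that $V_\infty$ is dense in both smoothings, observe that the hypothesis forces the two families of seminorms $\abs{\,\cdot\,}_{X,\lambda}$ to induce the same topology on $V_\infty$, and conclude that both smoothings are the completion of $V_\infty$ for that common topology. The only difference is that you spell out the final identification inside $V$ (the Hausdorff extension-uniqueness argument), which the paper leaves implicit with ``the lemma follows''; that is a welcome but inessential elaboration.
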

\begin{proof}
Apply Lemma \ref{densesmooth} to the representation $V|_S$, we see
that $V_\infty$ is dense in $(V|_S)_\infty$. Therefore
$(V|_S)_\infty$ is the completion of $V_\infty$ under the seminorms
\[
  \{\abs{\,\cdot\,}_{X,\lambda}\}_{X\in \oU(\s_\C),\,\lambda\in
  \Lambda}.
\]
Similarly, $(V|_{S'})_\infty$ is the completion of $V_\infty$ under
the seminorms
\[
  \{\abs{\,\cdot\,}_{X,\lambda}\}_{X\in \oU(\s'_\C),\,\lambda\in
  \Lambda}.
\]
The assumption of the lemma implies that these two families of
seminorms are the same. Therefore the lemma follows.

\end{proof}

We say that a representation is Hibertizable if its underlying
space is. The smoothing of a  Hibertizable representation is also
Hibertizable. Let $V'$ be another representation of another Lie
group $G'$. Then $V\widehat{\otimes}_\pi V'$ is a representation of
$G\times G'$. If both $V$ and $V'$ are Hibertizable, then
$V\widehat{\otimes}_\textrm{h} V'$ is also a (Hilbertizable)
representation of $G\times G'$.

\begin{lem}\label{smooth3}
If both $V$ and $V'$ are smooth, then so is $V\widehat{\otimes}_\pi
V'$. If both $V$ and $V'$ are smooth and Hibertizable, then so is
$V\widehat{\otimes}_\rh V'$.
\end{lem}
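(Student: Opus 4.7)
The plan is to show that, under either hypothesis, the natural inclusion $(V\widehat\otimes V')_\infty \hookrightarrow V\widehat\otimes V'$ is the identity on the level of topological vector spaces; this is equivalent to the representation of $G\times G'$ being smooth. Here ``$\widehat\otimes$'' denotes $\widehat\otimes_\pi$ in the first case and $\widehat\otimes_\rh$ in the second.

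First I would observe that every element of the algebraic tensor product $V\otimes V'$ is a smooth vector for the $G\times G'$-action. Indeed, for $v\in V$ and $v'\in V'$, the orbit map $(g,g')\mapsto gv\otimes g'v'$ factors as the smooth map $(g,g')\mapsto (gv,g'v')$ (by smoothness of $V$ and $V'$) followed by the continuous bilinear---hence smooth---tensor map $V\times V'\to V\widehat\otimes V'$. Thus $V\otimes V'\subset (V\widehat\otimes V')_\infty$, and this subspace is manifestly $G\times G'$-stable and dense in $V\widehat\otimes V'$.

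Next I would verify that the smoothing topology on $(V\widehat\otimes V')_\infty$ coincides with the subspace topology induced from $V\widehat\otimes V'$. The key input is that for $X\in\g$ the operator $v\mapsto X.v$ is continuous on $V$ (by smoothness of $V$), and likewise for $\g'$ on $V'$. Using the universal property of the appropriate tensor product, the tensor of two continuous operators extends to a continuous operator on the completion. Since $\oU((\g\oplus\g')_\C)\simeq \oU(\g_\C)\otimes\oU(\g'_\C)$, every $Z\in\oU((\g\oplus\g')_\C)$ acts on $V\widehat\otimes V'$ by a continuous operator. Consequently each defining seminorm $|w|_{Z,\mu}=\mu(Zw)$ of the smoothing topology is continuous in the ambient topology, so the two topologies agree (the reverse comparison is automatic).

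To finish, I would apply Lemma~\ref{densesmooth} to the $G\times G'$-stable dense subspace $V\otimes V'\subset (V\widehat\otimes V')_\infty$; by the lemma it is dense in $(V\widehat\otimes V')_\infty$ too. Since the smoothing is complete in its own topology, and by the previous step that topology equals the ambient one, density forces $(V\widehat\otimes V')_\infty = V\widehat\otimes V'$, yielding smoothness. The main obstacle will be justifying cleanly that the tensor of two continuous operators gives a well-defined continuous operator on $\widehat\otimes_\pi$ and on $\widehat\otimes_\rh$: for the projective topology this is a direct application of its universal property, while for the Hilbert topology one checks by hand that a tensor of continuous operators dominates the generating Hermitian seminorms $\la\,,\,\ra_\mu\otimes\la\,,\,\ra_\nu$ by others of the same form.
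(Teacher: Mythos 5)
Your overall strategy is the paper's: show that $V\otimes V'$ consists of smooth vectors, invoke Lemma \ref{densesmooth} for density, and use continuity of the enveloping-algebra action on the tensor product to compare the smoothing topology with the ambient one. The one place where you assert more than you justify is the claim that the defining seminorms $\abs{w}_{Z,\mu}=\abs{Z.w}_\mu$ of the smoothing topology are continuous on all of $(V\widehat\otimes_\pi V')_\infty$ for the ambient topology. What continuity of tensor-product operators gives you is a continuous operator $\tilde Z$ on $V\widehat\otimes_\pi V'$ extending the algebraic action of $Z\in \oU(\g_\C\times\g'_\C)$ on $V\otimes V'$; but in the seminorm, $Z.w$ is the intrinsic derived action on a smooth vector of the completion, and the identity $Z.w=\tilde Z w$ for arbitrary $w\in(V\widehat\otimes_\pi V')_\infty$ is precisely what needs an argument: it is obvious on $V\otimes V'$, not beyond it. The fix is short but forces a reordering of your steps: apply Lemma \ref{densesmooth} first (its hypotheses already hold after your first paragraph), take a net $w_\alpha\in V\otimes V'$ with $w_\alpha\to w$ in the smoothing topology, note that $Z.w_\alpha\to Z.w$ (the $\oU$-action is continuous for the smoothing topology, hence also for the coarser ambient one) while $\tilde Z w_\alpha\to\tilde Z w$ in the ambient topology, and conclude $Z.w=\tilde Z w$ since $Z.w_\alpha=\tilde Z w_\alpha$ and the ambient topology is Hausdorff. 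With that in hand, your ending (the smoothing is complete in the ambient-induced topology, hence closed, hence equal to the whole space by density of $V\otimes V'$) is correct.

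The paper sidesteps this point entirely: it compares the two topologies only on the common dense subspace $V\otimes V'$, where the two actions trivially coincide, and then observes that $(V\widehat\otimes_\pi V')_\infty$ and $V\widehat\otimes_\pi V'$ are two completions of $V\otimes V'$ for one and the same locally convex topology, hence equal. Your remark on the Hilbert case is fine and matches what the paper means by ``proved similarly'': a tensor of continuous operators is continuous for the Hilbert topology because $\la A\,\cdot\,,A\,\cdot\,\ra_\mu$ is again a continuous non-negative Hermitian form, so the generating seminorms are dominated by seminorms of the same shape.
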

\begin{proof}
We prove the firs assertion. The second one is proved similarly. The
algebraic tensor product $V\otimes V'$ is clearly contained in
$(V\widehat{\otimes}_\pi V')_\infty$. It is dense in
$V\widehat{\otimes}_\pi V'$ and is therefore also dense in
$(V\widehat{\otimes}_\pi V')_\infty$, by Lemma \ref{densesmooth}.
Note that $\oU(\g_\C\times \g_\C')$ acts continuously on
$V\otimes_\pi V'$. This implies that the topology of
$(V\widehat{\otimes}_\pi V')_\infty$ and of $V\widehat{\otimes}_\pi
V'$ have the same restriction to $V\otimes V'$. Consequently,
$(V\widehat{\otimes}_\pi V')_\infty$ and $V\widehat{\otimes}_\pi V'$
are both completions of $V\otimes V'$ with respect to a common
locally convex topology. Therefore they are the same and the first
assertion of the lemma follows.
\end{proof}

\begin{lem}\label{hs}
If both $V$ and $V'$ are Hibertizable, then
\begin{equation}\label{tens}
  (V\widehat{\otimes}_\rh V')_\infty=V_\infty\widehat{\otimes}_\rh
  V'_\infty.
\end{equation}
\end{lem}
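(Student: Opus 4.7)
The plan is to identify both sides of \eqref{tens} as the completion of the common algebraic subspace $V_\infty\otimes V'_\infty$ with respect to the same locally convex topology.

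First I would verify that $V_\infty\otimes V'_\infty$ is a $G\times G'$-stable subspace of $(V\widehat{\otimes}_\rh V')_\infty$: for $v\in V_\infty$ and $v'\in V'_\infty$ the orbit map $(g,g')\mapsto gv\otimes g'v'$ is smooth, so $v\otimes v'$ is a smooth vector for the $G\times G'$-action. Since $V_\infty$ is dense in $V$ and $V'_\infty$ in $V'$, the algebraic tensor $V_\infty\otimes V'_\infty$ is dense in $V\widehat{\otimes}_\rh V'$, and Lemma \ref{densesmooth} then yields density in $(V\widehat{\otimes}_\rh V')_\infty$. By construction it is also dense in $V_\infty\widehat{\otimes}_\rh V'_\infty$. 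Since both sides of \eqref{tens} are complete, it suffices to show that the induced topologies on $V_\infty\otimes V'_\infty$ coincide.

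For one direction, every defining Hermitian form $\alpha\otimes\beta$ for the topology of $V_\infty\widehat{\otimes}_\rh V'_\infty$ is dominated, by continuity of $\alpha$ and $\beta$ on the smoothings, by a finite sum of forms $\langle X\cdot,X\cdot\rangle_\mu\otimes\langle Y\cdot,Y\cdot\rangle_\nu$ with $X\in\oU(\g_\C)$, $Y\in\oU(\g'_\C)$, and $\mu$, $\nu$ continuous Hermitian forms on $V$, $V'$; monotonicity of the tensor product of positive semi-definite Hermitian forms in both slots supplies the domination. Evaluated at $v\otimes v'$ such a form equals $\langle(X\otimes Y)(v\otimes v'),(X\otimes Y)(v\otimes v')\rangle_{\mu\otimes\nu}$, which is a defining continuous Hermitian form for the topology of $(V\widehat{\otimes}_\rh V')_\infty$.

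The main obstacle is the reverse direction, where a general element of $\oU(\g_\C\times\g'_\C)=\oU(\g_\C)\otimes\oU(\g'_\C)$ must be handled. Writing $X=\sum_{i=1}^n X_i\otimes Y_i$, I would estimate for $v\otimes v'\in V_\infty\otimes V'_\infty$ and $\mu$, $\nu$ continuous Hermitian forms on $V$, $V'$,
\[
\|X(v\otimes v')\|_{\mu\otimes\nu}^2=\Big\|\sum_{i=1}^n X_iv\otimes Y_iv'\Big\|_{\mu\otimes\nu}^2\le n\sum_{i=1}^n\|X_iv\|_\mu^2\,\|Y_iv'\|_\nu^2,
\]
using the triangle inequality together with the elementary bound $(\sum_i a_i)^2\le n\sum_i a_i^2$. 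The right hand side is a finite sum of values of defining product Hermitian forms on $V_\infty\widehat{\otimes}_\rh V'_\infty$, so every defining seminorm of $(V\widehat{\otimes}_\rh V')_\infty$ is continuous in the Hilbert tensor product topology. This finishes the comparison of topologies, and hence of the completions, which gives \eqref{tens} as locally convex spaces and a fortiori as representations of $G\times G'$.
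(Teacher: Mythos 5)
Your proposal is correct and follows essentially the same route as the paper, whose proof is just a sketch reducing to the argument of Lemma \ref{smooth3}: both sides of \eqref{tens} are completions of the common dense subspace $V_\infty\otimes V'_\infty$ (density via Lemma \ref{densesmooth}), and the two topologies agree there by comparing the $\oU(\g_\C\times\g'_\C)$-twisted Hilbert tensor seminorms, exactly as you do. One small caution: state the seminorm comparison for arbitrary elements of $V_\infty\otimes V'_\infty$ rather than only elementary tensors (bounds on elementary tensors alone do not control a seminorm on their span); this costs nothing, since your identity $\mu(X_i\,\cdot\,,X_i\,\cdot\,)\otimes\nu(Y_i\,\cdot\,,Y_i\,\cdot\,)=(\mu\otimes\nu)\bigl((X_i\otimes Y_i)\,\cdot\,,(X_i\otimes Y_i)\,\cdot\,\bigr)$ holds as Hermitian forms on all of $V_\infty\otimes V'_\infty$, so the same triangle-inequality estimate applies verbatim to a general element $w$.
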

\begin{proof}
The proof is similar to that of Lemma \ref{smooth3} and we will not
go to the details. The key point is that both sides of \eqref{tens}
contain $V_\infty\otimes V'_\infty$ as a dense subspace.
\end{proof}

It is not clear to the author whether the analog of \eqref{tens}
holds for projective tensor products.

\subsection{Representations of moderate growth}\label{functions}

We will use the notation of \cite{SZ} for function spaces, as what follows. Let $G$ be an almost linear Nash group. It is not assumed to be a real Jacobi group in this section. We say that a (complex valued) function $f$ on $G$ has moderate growth if its  absolute value is bounded by a positive Nash
function $\phi$ on $G$:
\[
   \abs{f(x)}\leq\phi(x) \quad \textrm{for all } x\in G.
\]
A smooth function $f\in \con^\infty(G)$ is said to be tempered if
$Xf$ has moderate growth for all $X\in\RU(\g_\BC)$. Denote by
$\con^{\,\xi}(G)$ the space of all tempered smooth functions on $G$.

A smooth function $f\in \con^\infty(G)$ is called Schwartz if
\[
  \abs{f}_{X,\phi}:=\mathrm{sup}_{x\in G} \,\phi(x)\,\abs{(Xf)(x)}<
  \infty
\]
for all $X\in \RU(\g_\BC)$, and all positive functions $\phi$ on $G$
of moderate growth. Denote by $\con^{\,\varsigma}(G)$ the space of
Schwartz functions on $G$. It is a nuclear Fr\'{e}chet space under
the seminorms $\{\abs{\,\cdot}_{X,\phi}\}$.  We define the nuclear
Fr\'{e}chet space $\RD^{\varsigma}(G)$ of Schwartz densities on $G$
similarly. Fix a Haar measure $dg$ on $G$, then the map
\[
  \begin{array}{rcl}
               \con^{\,\varsigma}(G)&\rightarrow& \RD^{\varsigma}(G),\\
                                    f&\mapsto &f\,dg
  \end{array}
\]
is a topological linear isomorphism. We define a tempered
generalized function on $G$ to be a continuous linear functional on
$\RD^{\varsigma}(G)$. Denote by $\con^{-\xi}(G)$ the space of all
tempered generalized functions on $G$, equipped with the strong dual
topology. This topology coincides with the topology of uniform
convergence on compact subsets of $\RD^{\varsigma}(G)$, due to the
fact that every bounded subset of a nuclear locally convex space is
relatively compact.  Note that $\con^{\,\xi}(G)$ is canonically
identified with a dense subspace of $\con^{-\xi}(G)$:
\[\con^{\,\xi}(G)\hookrightarrow \con^{-\xi}(G).\]

 A representation $V$ of $G$ is said to be of moderate growth if for
every continuous seminorm $\abs{\,\cdot\,}_\mu$ on $V$, there is a
positive function $\phi$ on $G$ of moderate growth and a
continuous seminorm $\abs{\,\cdot\,}_\nu$ on $V$ such that
\[
   \abs{gv}_\mu\leq\phi(g)\, \abs{v}_\nu,\quad \textrm{for all } g\in
   G, \  v\in V.
\]
It is easy to check that the smoothing of a representation of
moderate growth is still of moderate growth. For every
representation $V$ of $G$  of moderate growth, the bilinear map
\begin{equation}\label{acts}
  \begin{array}{rcl}
        \RD^{\varsigma}(G)\times V&\rightarrow&  V_\infty,\\
       \lambda=f(g)\,dg,\, v&\mapsto & \lambda.v:=\int_G f(g) g.v\,  dg
  \end{array}
\end{equation}
is well defined and continuous. The space $\RD^{\varsigma}(G)$ is an
associative algebra under the convolution operator $*$. Under the
map \eqref{acts}, both $V$ and $V_\infty$ are
$\RD^{\varsigma}(G)$-modules.

\subsection{Matrix coefficients for distributional
vectors}\label{smatrix} Let $G$ be an almost linear Nash group as in the last subsection. Let
$U^\th$ and $V^\th$ be two Hilbert space representations of $G$ of moderate growth which
are strongly dual to each other in the following sense: there is
given a $G$-invariant continuous bilinear map
\[
  \la\, ,\,\ra: U^\th\times V^\th\rightarrow \C
\]
which induces a topological isomorphism from $U^\th$ to the strong dual
of $V^\th$ (and hence a topological isomorphism from $V^\th$ to the strong dual of
$U^\th$). Denote by $U^{-\infty}$ the strong dual of the smoothing
$V^\th_\infty$ of $V^\th$. This is a locally convex space carrying a linear
action of $G$. There are canonical $G$-intertwining continuous
injections
\[
   U^\th_\infty\rightarrow U^\th\rightarrow U^{-\infty}.
\]
It is not clear to the author whether $U^\th$ is dense in $U^{-\infty}$
in general. Similarly, denote by $V^{-\infty}$ the strong dual of
the smoothing $U^\th_\infty$.

Define a bilinear map
\begin{equation}\label{act0}
  \begin{array}{rcl}
        \RD^{\varsigma}(G)\times U^{-\infty}&\rightarrow&
        U^\th=\Hom_\C(V^\th,\C),\\
       \lambda=f(g)\,dg,\, \phi&\mapsto & \lambda.\phi:=\left(v\mapsto \phi\left(\int_G f(g) g^{-1}.v\,
        dg\right)\right).
  \end{array}
\end{equation}
The following lemma is elementary and may be proved by the argument
of \cite[Section 3]{SZ}.

\begin{lem}\label{actl}
The map \eqref{act0} is well defined and its image is contained in
$U^\th_\infty$. The resulting bilinear map
\[
    \RD^{\varsigma}(G)\times U^{-\infty}\rightarrow U_\infty,\quad (\lambda, \phi)\mapsto
    \lambda.\phi
\]
is separately continuous and extends the continuous bilinear map
\eqref{acts} for the representation $U^{\th}$.
\end{lem}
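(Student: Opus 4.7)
The plan is to realize $\lambda.\phi$ as a composition of two standard continuous operations and then bootstrap the smoothness and continuity of the result from the bilinear map \eqref{acts} already at our disposal. First I would observe that for $\lambda=f(g)\,dg\in\RD^{\varsigma}(G)$, the assignment
\[
  T_\lambda:V^{\th}\to V^{\th}_\infty,\qquad v\mapsto\int_G f(g)\,g^{-1}.v\,dg,
\]
is a continuous linear map. This is simply the inversion-twisted form of \eqref{acts}, obtained either by substituting $g\mapsto g^{-1}$ (absorbing the modular function into the density) or by rerunning the proof of \eqref{acts} with the roles of $g$ and $g^{-1}$ exchanged. Postcomposing $T_\lambda$ with the continuous linear functional $\phi\in U^{-\infty}=(V^{\th}_\infty)^*$ yields a continuous linear functional $\phi\circ T_\lambda$ on $V^{\th}$; under the strong duality $U^{\th}\cong (V^{\th})^*$ this is precisely $\lambda.\phi$, so well-definedness and the fact that $\lambda.\phi\in U^{\th}$ are settled at once.

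Next I would upgrade the target from $U^{\th}$ to $U^{\th}_\infty$ by analyzing the orbit map $h\mapsto h.(\lambda.\phi)$. A direct change of variables gives the intertwining identity
\[
  h.(\lambda.\phi) = (L_h\lambda).\phi, \qquad h\in G,
\]
where $L_h$ denotes left translation on $\RD^{\varsigma}(G)$. The map $h\mapsto L_h\lambda$ is smooth into $\RD^{\varsigma}(G)$, while the linear map $\mu\mapsto\mu.\phi$ is continuous from $\RD^{\varsigma}(G)$ to $U^{\th}$ since it is the transpose, under strong duality, of $\mu\mapsto T_\mu$ evaluated against $\phi$. The composition of a smooth map followed by a continuous linear map is smooth, so the orbit map is a smooth map $G\to U^{\th}$ and hence $\lambda.\phi\in U^{\th}_\infty$. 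Separate continuity of $(\lambda,\phi)\mapsto\lambda.\phi$ into $U^{\th}_\infty$ then follows by differentiating the orbit identity to obtain $X.(\lambda.\phi)=(\dot L_X\lambda).\phi$ for $X\in\oU(\g_\C)$ and reducing each seminorm $\abs{\,\cdot\,}_{X,\lambda'}$ on $U^{\th}_\infty$ to the two continuity properties of $\mu\mapsto\mu.\phi$ already established.

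The extension clause is immediate: for $u\in U^{\th}$ embedded in $U^{-\infty}$ via $\phi(w)=\la u,w\ra$, the $G$-invariance of the pairing together with Fubini gives
\[
  (\lambda.\phi)(v)=\la u,T_\lambda v\ra=\int_G f(g)\,\la g.u,v\ra\,dg=\la \lambda.u,v\ra,
\]
recovering \eqref{acts}. The main obstacle I anticipate is establishing the continuity of $\mu\mapsto\mu.\phi$ from $\RD^{\varsigma}(G)$ into $U^{\th}$ in its strong (Hilbert) topology rather than only in the weak-$*$ topology inherited from $(V^{\th})^*$: this must be handled by viewing the map as the transpose of a continuous linear map between Fr\'echet spaces, exactly as in the blueprint of \cite[Section~3]{SZ} flagged by the author, after which every remaining assertion of the lemma reduces to that continuity together with the inversion-twisted avatar of \eqref{acts}.
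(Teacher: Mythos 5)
Your argument is correct and is essentially the proof the paper intends: the paper gives no details beyond deferring to the argument of \cite[Section 3]{SZ}, and your realization of $\lambda.\phi$ as $\phi\circ T_\lambda$ with $T_\lambda$ the inversion-twisted form of \eqref{acts}, combined with the identity $h.(\lambda.\phi)=(L_h\lambda).\phi$, the smoothness of $h\mapsto L_h\lambda$, and the continuity of $\mu\mapsto\mu.\phi$ extracted from the (joint) continuity of \eqref{acts}, is exactly that argument. The points you leave implicit --- that inversion (with the positive Nash modular function absorbed into the density) preserves $\RD^{\varsigma}(G)$, and that left translation makes $\RD^{\varsigma}(G)$ a smooth representation --- are standard facts and do not constitute gaps.
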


Finally, we define the matrix coefficient map
\[
  \begin{array}{rcl}
      c:  U^{-\infty}\times V^{-\infty}&\rightarrow&
        \con^{-\xi}(G)=\Hom_\C(\RD^{\varsigma}(G),\C),\\
   \phi, \phi'&\mapsto & c_{\phi\otimes \phi'}:=\left(\lambda\mapsto \phi'(\lambda.\phi)\right).
  \end{array}
\]
This extends the ordinary
matrix coefficient map (see \eqref{matrixo}), and is separately
continuous (see the proof of \cite[Lemma 3.6]{SZ}).

\section{Representations of Heisenberg groups}

Let $\mathrm w$ be a finite dimensional real vector space with a non-degenerate skew-symmetric bilinear map
\[
  \la\,,\,\ra_{\mathrm w}: \mathrm w\times \mathrm w\rightarrow \mathrm z,\quad \textrm{where $\dim_\R(\mathrm z)=1$.}
\]
Let $\oH(\mathrm{w})=\mathrm{w}\times \mathrm{z}$ be the associated Heisenberg group, with group multiplication
\[
  (u,t)(u',t'):=(u+u', t+t'+\la u,u'\ra_{\mathrm w}).
\]
Let $\psi_\mathrm{z}$ be a non-trivial unitary character on $\mathrm{z}$. By Stone-Von
Neumann Theorem, up to isomorphism, there is a unique irreducible
unitary $\psi_{\mathrm z}$-representation $\omega_{\psi_{\mathrm z}}^\rh$ of
$\oH(\mathrm{w})$. Write $\omega_{\psi_{\mathrm z}}$ for its smoothing. It is well known that
this is a nuclear Fr\'{e}chet space.

\subsection{Smooth representations}
Recall that $\CJ\mathbf{mod}_{\oH(\mathrm{w}),\psi_\mathrm{z}}$ is
the category of smooth Fr\'{e}chet $\psi_{\mathrm z}$-representations of $\oH(\rw)$ of
moderate growth. The following
fact is fundamental to this paper.
\begin{prp}\label{factor1}
For every representation $V$ in
$\CJ\mathbf{mod}_{\oH(\rw),\psi_\rz}$,
$\Hom_{\oH(\rw)}(\omega_{\psi_\rz}, V)$ is a Fr\'{e}chet space under
the topology of uniform convergence on bounded sets. For every
Fr\'{e}chet space $E$, $\omega_{\psi_\mathrm{z}}\widehat{\otimes}E$
is a representation in $\CJ\mathbf{mod}_{\oH(\rw),\psi_\rz}$.
Further more, one has that
\[
  \omega_{\psi_\mathrm{z}}\widehat{\otimes} \, \Hom_{\oH(\mathrm{w})}(\omega_{\psi_\mathrm{z}},
  V)=V,
\]
and
\[
   \Hom_{\oH(\mathrm{w})}(\omega_{\psi_\mathrm{z}},\omega_{\psi_\mathrm{z}}\widehat{\otimes}
   E)=E.
\]
Consequently, the functors
\[
\Hom_{\oH(W)}(\omega_{\psi_\mathrm{z}},\,\cdot)\,
\quad\textrm{and}\quad \omega_{\psi_\mathrm{z}}\widehat{\otimes}
\,\cdot
 \]
are mutually inverse equivalences of categories between the category
$\CJ\mathbf{mod}_{\oH(\mathrm{w}),\psi_\mathrm{z}}$ and the category
of Fr\'{e}chet space.
\end{prp}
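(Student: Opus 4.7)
The strategy is to lift the Mackey-style multiplicity decomposition from the unitary Hilbert setting, where it is a direct consequence of Stone--von Neumann, to the smooth Fr\'echet setting by means of the smoothing functor and the tensor-product identities of Section \ref{hilt}; the final step is then to widen the multiplicity side from Hilbertizable spaces to arbitrary Fr\'echet ones via a direct Schr\"odinger-model argument. At the Hilbert level, Stone--von Neumann together with Mackey's multiplicity theory gives, for every unitary $\psi_\rz$-representation $V^\rh$ of $\oH(\rw)$, a canonical unitary isomorphism
\[
V^\rh \;=\; \omega_{\psi_\rz}^\rh \,\widehat{\otimes}_\rh\, \Hom_{\oH(\rw)}(\omega_{\psi_\rz}^\rh, V^\rh),
\]
with the Hilbert multiplicity space on the right carrying the trivial $\oH(\rw)$-action; and conversely, for every Hilbert space $E^\rh$, the tensor product $\omega_{\psi_\rz}^\rh \widehat{\otimes}_\rh E^\rh$ lies in $\CH\mathbf{mod}_{\oH(\rw),\psi_\rz}$.

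Taking smoothings of this decomposition and applying Lemma \ref{hs} (the multiplicity space is automatically smooth under its trivial $\oH(\rw)$-action), I obtain
\[
V^\rh_\infty \;=\; (\omega_{\psi_\rz}^\rh)_\infty \,\widehat{\otimes}_\rh\, \Hom_{\oH(\rw)}(\omega_{\psi_\rz}^\rh, V^\rh) \;=\; \omega_{\psi_\rz} \,\widehat{\otimes}_\rh\, \Hom_{\oH(\rw)}(\omega_{\psi_\rz}^\rh, V^\rh).
\]
Since $\omega_{\psi_\rz}$ is nuclear Fr\'echet, Lemma \ref{tensor0} converts the Hilbert tensor product on the right into the standard tensor product $\widehat{\otimes}$. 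This proves the proposition for those $V \in \CJ\mathbf{mod}_{\oH(\rw),\psi_\rz}$ of the form $V^\rh_\infty$, with Hilbertizable multiplicity spaces.

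For an arbitrary Fr\'echet space $E$, I would first verify directly that $\omega_{\psi_\rz}\widehat{\otimes}E$ lies in $\CJ\mathbf{mod}_{\oH(\rw),\psi_\rz}$: it is Fr\'echet as a completed projective tensor product, smooth by Lemma \ref{smooth3} (with $E$ carrying the trivial action), a $\psi_\rz$-representation from the $\omega_{\psi_\rz}$-factor, and of moderate growth because the estimates on $\omega_{\psi_\rz}$ pass through $\widehat{\otimes}E$. To establish the identity $\Hom_{\oH(\rw)}(\omega_{\psi_\rz}, \omega_{\psi_\rz}\widehat{\otimes}E) = E$, I would fix a Lagrangian decomposition $\rw = \ell \oplus \ell'$ and the corresponding Schr\"odinger model with $\omega_{\psi_\rz} = \CS(\ell)$ and $\omega_{\psi_\rz}\widehat{\otimes}E = \CS(\ell;E)$; an $\oH(\rw)$-equivariant continuous map $T: \CS(\ell) \to \CS(\ell;E)$ then commutes with all translations by $\ell$ and all modulations by characters of $\ell'$, and the classical argument underlying Stone--von Neumann (modulation-invariance forces $T$ to act pointwise by an $E$-valued multiplier, translation-invariance forces that multiplier to be constant) produces a unique $e \in E$ with $T = (f \mapsto f \otimes e)$, yielding the natural topological isomorphism $E \xrightarrow{\sim} \Hom_{\oH(\rw)}(\omega_{\psi_\rz}, \omega_{\psi_\rz}\widehat{\otimes}E)$. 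The Fr\'echet-ness of $\Hom_{\oH(\rw)}(\omega_{\psi_\rz}, V)$ and the remaining reconstruction $V = \omega_{\psi_\rz}\widehat{\otimes}\Hom_{\oH(\rw)}(\omega_{\psi_\rz}, V)$ for general $V$ follow by combining this with the Hilbertizable case.

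The main obstacle is topological bookkeeping: one must carry out the identifications as homeomorphisms (not merely set-theoretic bijections), extend the Hilbertizable reconstruction to arbitrary Fr\'echet multiplicity spaces, and show that $\Hom_{\oH(\rw)}(\omega_{\psi_\rz}, V)$ is Fr\'echet in the bounded-convergence topology. The nuclearity of $\omega_{\psi_\rz}$ is indispensable throughout: it is what lets Lemma \ref{tensor0} collapse $\widehat{\otimes}_\rh$ and $\widehat{\otimes}$, what validates the Schwartz-kernel identification $\omega_{\psi_\rz}\widehat{\otimes}E = \CS(\ell;E)$, and what makes the smooth Stone--von Neumann analysis available for arbitrary Fr\'echet $E$.
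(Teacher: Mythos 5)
There is a genuine gap, and it sits exactly at the heart of the proposition. Your argument covers two situations: (i) $V$ is the smoothing of a unitary $\psi_\rz$-representation, handled by Stone--von Neumann plus Lemma \ref{hs} and Lemma \ref{tensor0}; and (ii) $V$ is already of the form $\omega_{\psi_\rz}\widehat{\otimes}E$, handled by your Schr\"odinger-model computation of $\Hom_{\oH(\rw)}(\omega_{\psi_\rz},\omega_{\psi_\rz}\widehat{\otimes}E)$. But a general object of $\CJ\mathbf{mod}_{\oH(\rw),\psi_\rz}$ is just a smooth Fr\'echet $\psi_\rz$-representation of moderate growth: it is not assumed Hilbertizable, not assumed to be the smoothing of anything unitary, and not assumed to factor as $\omega_{\psi_\rz}\widehat{\otimes}E$. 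The assertion that the reconstruction $V=\omega_{\psi_\rz}\widehat{\otimes}\Hom_{\oH(\rw)}(\omega_{\psi_\rz},V)$ for such $V$ ``follows by combining'' (i) and (ii) is not an argument --- proving that every object of the category factors in this way is precisely the content of the proposition, and neither of your two cases reaches it. The same applies to the Fr\'echet-ness of $\Hom_{\oH(\rw)}(\omega_{\psi_\rz},V)$ in the bounded-convergence topology: for Fr\'echet spaces the operator space with this topology is in general not metrizable, so this claim is not automatic and in effect is another consequence of the missing factorization rather than an ingredient you can assume.

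The paper closes exactly this gap by a different mechanism: Lemma \ref{kernel} shows that the annihilator $I_{\psi_\rz}$ of $\omega_{\psi_\rz}$ in $\RD^{\varsigma}(\oH(\rw))$ kills every object of $\CJ\mathbf{mod}_{\oH(\rw),\psi_\rz}$ (the key analytic input being Segal's theorem that $\con^{\,\varsigma}(\rw)$ acts faithfully on $\omega_{\psi_\rz}$, cf.\ \cite{Ho80}), so every object becomes a nondegenerate module over the algebra $\RD^{\varsigma}(\oH(\rw))/I_{\psi_\rz}\cong\con^{\,\varsigma}(\rw)$; the factorization and the equivalence of categories are then quoted from du Cloux's structure theory of such modules (\cite[Theorem 3.3]{du87} and \cite[Theorem 3.4]{du89}). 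If you want a self-contained proof along your lines, you would have to replace ``combining with the Hilbertizable case'' by an actual argument valid for arbitrary Fr\'echet $V$ in the category --- for instance, an analogue of the module-theoretic step above, or a direct proof that the evaluation map $\omega_{\psi_\rz}\widehat{\otimes}\Hom_{\oH(\rw)}(\omega_{\psi_\rz},V)\rightarrow V$ is a topological isomorphism using the action of $\RD^{\varsigma}(\oH(\rw))$ on $V$; your steps (i) and (ii), while essentially sound (and your Schr\"odinger-model multiplier argument is a reasonable substitute for the uniqueness-of-pairing input), do not substitute for it.
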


In view of the following lemma, Proposition \ref{factor1} is a
combination of \cite[Theorem 3.3]{du87} and \cite[Theorem
3.4]{du89}.

\begin{lemp}\label{kernel}
Denote by $I_{\psi_\mathrm{z}}$ the annihilator of
$\omega_{\psi_\mathrm{z}}$ in $\RD^{\varsigma}(\oH(\rw))$. Then
$I_{\psi_\mathrm{z}}$ annihilates every representation in
$\CJ\mathbf{mod}_{\oH(\mathrm{w}),\psi_\mathrm{z}}$. Consequently,
every representation in
$\CJ\mathbf{mod}_{\oH(\mathrm{w}),\psi_\mathrm{z}}$ is a
$\RD^{\varsigma}(\oH(\rw))/I_{\psi_\mathrm{z}}$-module.
\end{lemp}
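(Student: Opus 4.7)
The plan is to describe the action of $\RD^{\varsigma}(\oH(\rw))$ on any object of $\CJ\mathbf{mod}_{\oH(\rw),\psi_\rz}$ concretely via a ``partial Fourier transform along $\rz$'', and then to isolate $I_{\psi_\rz}$ using injectivity of a Weyl-type transform into $\End(\omega_{\psi_\rz})$. Using the set-theoretic section $u\mapsto (u,0)$ and Haar measures $du$, $dt$ on $\rw$, $\rz$, I would introduce the continuous linear map
\[
  \pi_{\psi_\rz}:\RD^{\varsigma}(\oH(\rw))\rightarrow \con^{\,\varsigma}(\rw),\qquad f(u,t)\,du\,dt\mapsto \Bigl(u\mapsto \int_\rz f(u,t)\,\psi_\rz(t)\,dt\Bigr),
\]
which is well defined since evaluating the partial Fourier transform of a Schwartz function at a single frequency yields a Schwartz function.

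For any $V\in\CJ\mathbf{mod}_{\oH(\rw),\psi_\rz}$, $v\in V$, and $\lambda=f(u,t)\,du\,dt$, the identity $(u,t)=(u,0)(0,t)$ combined with $(0,t)\cdot v=\psi_\rz(t)v$ gives, via Fubini (justified because $V$ is of moderate growth and $f$ is Schwartz),
\[
  \lambda\cdot v=\int_\rw \pi_{\psi_\rz}(\lambda)(u)\,(u,0)\cdot v\,du.
\]
In particular $\lambda\cdot v$ depends only on $\pi_{\psi_\rz}(\lambda)$, so $\pi_{\psi_\rz}(\lambda)=0$ immediately implies that $\lambda$ annihilates every representation in the category. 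This already gives the inclusion $\ker \pi_{\psi_\rz}\subset I_{\psi_\rz}$.

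To close the circle I need the reverse inclusion $I_{\psi_\rz}\subset \ker \pi_{\psi_\rz}$, i.e.\ the Weyl-type map
\[
  W:\con^{\,\varsigma}(\rw)\rightarrow \End(\omega_{\psi_\rz}),\qquad \tilde f\mapsto \Bigl(v\mapsto \int_\rw \tilde f(u)\,(u,0)\cdot v\,du\Bigr)
\]
is injective. I would verify this by fixing a Lagrangian decomposition $\rw=\ell\oplus\ell'$, realizing $\omega_{\psi_\rz}$ on $\con^{\,\varsigma}(\ell)$ in the Schr\"odinger model, and computing the Schwartz kernel of $W(\tilde f)$ on $\ell\times\ell$. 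A direct calculation identifies this kernel, up to the Schr\"odinger cocycle, with the partial Fourier transform of $\tilde f$ along $\ell'$ twisted by $\psi_\rz$; since that partial Fourier transform is a topological isomorphism $\con^{\,\varsigma}(\ell\oplus\ell')\cong \con^{\,\varsigma}(\ell\times\ell)$, it is in particular injective. Combining the two inclusions yields $I_{\psi_\rz}=\ker \pi_{\psi_\rz}$, and the previous paragraph then gives the lemma; the final assertion that every $V$ is a $\RD^{\varsigma}(\oH(\rw))/I_{\psi_\rz}$-module is immediate.

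The principal hurdle is the Schr\"odinger-model computation: one must track normalizations of Haar measures on $\rw$, $\rz$ and $\ell$ and the Heisenberg cocycle carefully enough to recognise the operator kernel of $W(\tilde f)$ as a genuine partial Fourier transform on Schwartz space. The reduction in the first two paragraphs is essentially formal, but the injectivity of $W$ is the genuine analytic content — a smooth-category variant of the Stone-Von Neumann theorem.
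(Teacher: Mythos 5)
Your proposal is correct and follows essentially the same route as the paper: the paper also defines the partial Fourier transform along the center, $\RD^{\varsigma}(\oH(\rw))\rightarrow \con^{\,\varsigma}(\rw)$, observes that its kernel annihilates every object of $\CJ\mathbf{mod}_{\oH(\rw),\psi_\rz}$, and then identifies this kernel with $I_{\psi_\rz}$ via the faithfulness of the resulting action of $\con^{\,\varsigma}(\rw)$ on $\omega_{\psi_\rz}$. The only difference is that the paper quotes this faithfulness as a classical theorem of Segal (cited from Howe), whereas you prove it directly by the Schr\"odinger-model kernel computation, which is a standard and valid substitute.
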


\begin{proof}
Fix a Haar measure $dw$ on $\mathrm{w}$ and a Haar measure $dz$ on
$\mathrm{z}$. Then $dh:=dw\otimes dz$ is a Haar measure on
$\oH(\mathrm{w})$. Define a continuous surjective map
\begin{equation}\label{desc}
  \begin{array}{rcl}
    \RD^{\varsigma}(\oH(\mathrm{w}))&\rightarrow& \con^{\,\varsigma}(\mathrm{w}),\\
     \lambda=f\,dh&\mapsto &( w\mapsto \int_{\mathrm{z}}
        f(zw)\psi(z)\,dz).
  \end{array}
\end{equation}
Denote by $I'_{\psi_\mathrm{z}}$ the kernel of this map. It is
checked to be a closed ideal of $\RD^{\varsigma}(\oH(w))$. We view
$\con^{\,\varsigma}(\mathrm{w})$ as an associative algebra so that
the map \eqref{desc} is an algebra homomorphism. It is easy to see
that all representations in
$\CJ\mathbf{mod}_{\oH(\mathrm{w}),\psi_\mathrm{z}}$ are annihilated
by $I'_{\psi_\mathrm{z}}$. Therefore they are all
$\con^{\,\varsigma}(\mathrm{w})$-modules.

On the other hand, a classical result of I. Segal says that the
action of $\con^{\,\varsigma}(\mathrm{w})$ on $\omega_{\psi_\rz}$ is
faithful (c.f. \cite[page 826]{Ho80}). Therefore
$I'_{\psi_\mathrm{z}}=I_{\psi_\mathrm{z}}$. This proves the lemma.

\end{proof}

\subsection{Unitary representations}
As usual, for every complex vector space $E$, write $\bar E$ for its
complex conjugation. This equals to $E$ as a real vector space, and
its complex scalar multiplication is obtained by composing the
complex conjugation with the scalar multiplication of $E$.

Denote by $\la\,,\,\ra_{\psi_{\mathrm{z}}}$ the inner product on the
Hilbert space $\omega_{\psi_{\mathrm z}}^\rh$. It  restricts to a
nonzero $\oH(\mathrm{w})$-invariant continuous bilinear form
\[
   \la\,,\,\ra_{\psi_{\mathrm{z}}}:\omega_{\psi_\mathrm{z}}\times \bar{\omega}_{\psi_\mathrm{z}}
   \rightarrow \C.
\]
It is well known that such a form is unique up to scalar (see
\cite[Proposition 4.12]{du91} for example). This implies the following
\begin{lemp}\label{factorf}
For all Fr\'{e}chet spaces $E$ and $F$, the map
\[
   \begin{array}{rcl}
   \oB(E,F)&\rightarrow &\oB_{\oH(\mathrm{w})}(\omega_{\psi_\mathrm{z}}\widehat{\otimes}
  E, \bar \omega_{\psi_\mathrm{z}}\widehat{\otimes}F),\\
   b&\mapsto& \la\,,\,\ra_{\psi_{\mathrm{z}}}\otimes b
   \end{array}
\]
is a linear isomorphism, where ``$\oB$" stands for the space of
continuous bilinear forms, and ``$\oB_{\oH(\mathrm{w})}$" stands for
the space of $\oH(\mathrm{w})$-invariant continuous bilinear forms.
\end{lemp}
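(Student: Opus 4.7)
The plan is to deduce the lemma from the one-dimensionality of the space of $\oH(\mathrm{w})$-invariant continuous bilinear forms on $\omega_{\psi_\mathrm{z}}\times\bar\omega_{\psi_\mathrm{z}}$ (recalled just above the statement) by a slicing argument in the variables $(e,f)\in E\times F$. First I would verify that the map $b\mapsto\la\,,\,\ra_{\psi_\mathrm{z}}\otimes b$ is well defined: since $\omega_{\psi_\mathrm{z}}$ and $\bar\omega_{\psi_\mathrm{z}}$ are nuclear, the completed tensor products $\omega_{\psi_\mathrm{z}}\widehat{\otimes}E$ and $\bar\omega_{\psi_\mathrm{z}}\widehat{\otimes}F$ coincide with the projective ones, and the external tensor product of the continuous bilinear forms $\la\,,\,\ra_{\psi_\mathrm{z}}$ and $b$ extends by continuity to a continuous bilinear form on the product; the $\oH(\mathrm{w})$-invariance is immediate because the group acts trivially on $E$ and $F$. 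For injectivity I would fix once and for all a pair $(v_0,\bar w_0)$ with $\la v_0,\bar w_0\ra_{\psi_\mathrm{z}}=1$, which exists since the pairing is nonzero, and observe that evaluating at $(v_0\otimes e,\bar w_0\otimes f)$ recovers $b(e,f)$.

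The substantive point is surjectivity. Given a continuous $\oH(\mathrm{w})$-invariant bilinear form $B$ on $(\omega_{\psi_\mathrm{z}}\widehat{\otimes}E)\times(\bar\omega_{\psi_\mathrm{z}}\widehat{\otimes}F)$, for each $(e,f)\in E\times F$ I would form the slice
\[
   B_{e,f}(v,\bar w) := B(v\otimes e,\bar w\otimes f),
\]
which is continuous, bilinear, and $\oH(\mathrm{w})$-invariant on $\omega_{\psi_\mathrm{z}}\times\bar\omega_{\psi_\mathrm{z}}$. By the uniqueness of the Heisenberg pairing there is a unique scalar $b(e,f)\in\C$ with $B_{e,f}=b(e,f)\,\la\,,\,\ra_{\psi_\mathrm{z}}$. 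The resulting function $b:E\times F\to\C$ is bilinear by bilinearity of $B$, and the explicit identity $b(e,f)=B(v_0\otimes e,\bar w_0\otimes f)$ shows that $b$ is separately continuous, hence, by Fr\'echetness of $E$ and $F$, jointly continuous.

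It then remains to identify $B$ with $\la\,,\,\ra_{\psi_\mathrm{z}}\otimes b$ on the whole completed product. By construction the two forms agree on elementary tensors, hence, by bilinearity, on the algebraic tensor product $(\omega_{\psi_\mathrm{z}}\otimes E)\times(\bar\omega_{\psi_\mathrm{z}}\otimes F)$; this subset is dense in $(\omega_{\psi_\mathrm{z}}\widehat{\otimes}E)\times(\bar\omega_{\psi_\mathrm{z}}\widehat{\otimes}F)$ by construction of the completed tensor product, and both forms are jointly continuous on this product of Fr\'echet spaces, so they coincide. I expect the main obstacle to be keeping this final density-and-continuity step clean: a priori $B_{e,f}$ is only defined by pulling $B$ back along the map $(v,\bar w)\mapsto(v\otimes e,\bar w\otimes f)$ and one needs the Fr\'echet joint-continuity upgrade to promote the resulting pointwise identification to an identification of forms on the full completed tensor product. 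Nuclearity of $\omega_{\psi_\mathrm{z}}$ and Fr\'echetness of $E,F$ are the two structural inputs that make this step go through.
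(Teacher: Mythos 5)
Your argument is correct and is precisely the deduction the paper intends: the paper cites the one-dimensionality of $\oB_{\oH(\mathrm{w})}(\omega_{\psi_\mathrm{z}},\bar\omega_{\psi_\mathrm{z}})$ (du Cloux) and leaves the slicing/density argument implicit, which you have simply written out in full. No gaps; the nuclearity of $\omega_{\psi_\mathrm{z}}$ and the Fr\'echet hypotheses are used exactly where needed.
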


Denote by $\CH\mathbf{mod}_{\oH(\mathrm{w}),\psi_\mathrm{z}}$ the
category of unitary $\psi_\mathrm{z}$-representations of $\oH(\rw)$. For every Hilbert space $E$ with inner product
$\la\,,\,\ra_E$, $\omega_{\psi_{\mathrm z}}^\rh\widehat{\otimes}_\rh
E$ is a unitary representation in
$\CH\mathbf{mod}_{\oH(\mathrm{w}),\psi_\mathrm{z}}$. (The inner
product is $\la\,,\,\ra_{\psi_{\mathrm{z}}}\otimes \la\,,\,\ra_E$.)
Conversely, we have
\begin{lemp}\label{factorh}
Every representation $V$ in
$\CH\mathbf{mod}_{\oH(\mathrm{w}),\psi_\mathrm{z}}$ is unitarily
isomorphic to $\omega_{\psi_{\mathrm z}}^\rh\widehat{\otimes}_\rh E$
for some Hilbert space $E$.
\end{lemp}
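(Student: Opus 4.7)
\vsp
\noindent\textbf{Proof proposal.} The plan is to realize $V$ as $\omega_{\psi_{\mathrm z}}^\rh\widehat{\otimes}_\rh E$, where the Hilbert space $E$ is the multiplicity space
\[
   E := \Hom_{\oH(\rw)}(\omega_{\psi_{\mathrm z}}^\rh, V)
\]
of bounded $\oH(\rw)$-equivariant operators. First I would equip $E$ with a Hilbert structure. Because the ambient representations are unitary, for $S,T\in E$ the adjoint $T^*$ is also $\oH(\rw)$-equivariant, so $T^*S$ is a bounded intertwining endomorphism of the irreducible unitary representation $\omega_{\psi_{\mathrm z}}^\rh$ and hence a scalar by Schur's lemma. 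This lets me define $\la S,T\ra_E\in\C$ by the equation $T^*S = \la S,T\ra_E\cdot\mathrm{id}$. It is positive definite and Hermitian, and the induced norm agrees with the operator norm, so $E$ is complete as a closed subspace of the space of bounded operators from $\omega_{\psi_{\mathrm z}}^\rh$ to $V$.

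Next I would construct the candidate unitary equivalence via the evaluation map
\[
   \Phi:\omega_{\psi_{\mathrm z}}^\rh\otimes E\rightarrow V,\quad v\otimes T\mapsto T(v).
\]
Taking the $\oH(\rw)$-action on $E$ to be trivial, $\Phi$ is $\oH(\rw)$-equivariant. On pure tensors one computes
\[
   \la T_1(v_1), T_2(v_2)\ra_V = \la T_2^*T_1 v_1, v_2\ra_{\psi_{\mathrm z}} = \la T_1,T_2\ra_E\,\la v_1, v_2\ra_{\psi_{\mathrm z}},
\]
so $\Phi$ is isometric on the algebraic tensor product with respect to the Hilbert cross-norm, and therefore extends uniquely to an $\oH(\rw)$-equivariant isometric embedding
\[
   \Phi:\omega_{\psi_{\mathrm z}}^\rh\widehat{\otimes}_\rh E\rightarrow V.
\]

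The main obstacle is surjectivity. The image $\Im(\Phi)$ is closed (isometric images of Hilbert spaces are closed) and $\oH(\rw)$-stable, so it suffices to show that its orthogonal complement $W\subset V$ is zero. Note that $W$ is itself a unitary $\psi_{\mathrm z}$-representation of $\oH(\rw)$. The idea is that if $W\neq 0$, the Stone--Von Neumann theorem should produce a nonzero $\oH(\rw)$-equivariant isometric embedding $\iota:\omega_{\psi_{\mathrm z}}^\rh\hookrightarrow W$; but then $\iota\in E$ while $\iota(v)=\Phi(v\otimes\iota)$ lies in $\Im(\Phi)\cap W=0$ for every $v$, contradicting $\iota\neq 0$. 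Hence $W=0$ and $\Phi$ is a unitary isomorphism. The hard technical input is this last invocation: the existence in \emph{any} nonzero unitary $\psi_{\mathrm z}$-representation of an irreducible subrepresentation, which one can establish either by applying Zorn's lemma to the family of orthogonal $\oH(\rw)$-stable subspaces unitarily equivalent to $\omega_{\psi_{\mathrm z}}^\rh$, or via a direct integral decomposition combined with the uniqueness part of Stone--Von Neumann, which forces every irreducible $\psi_{\mathrm z}$-representation of $\oH(\rw)$ to be unitarily equivalent to $\omega_{\psi_{\mathrm z}}^\rh$.
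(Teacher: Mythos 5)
Your proposal is correct in outline, and it takes a genuinely different route from the paper. The paper never argues directly in the unitary category: it applies Proposition \ref{factor1} (du Cloux's equivalence for smooth Fr\'{e}chet $\psi_{\mathrm z}$-representations of moderate growth) to the smoothing $V_\infty$, writing $V_\infty=\omega_{\psi_{\mathrm z}}\widehat{\otimes}E_0$ with $E_0$ Fr\'{e}chet, then invokes the uniqueness of the $\oH(\rw)$-invariant pairing (Lemma \ref{factorf}) to see that the inner product of $V$ restricts on $\omega_{\psi_{\mathrm z}}\widehat{\otimes}E_0$ to $\la\,,\,\ra_{\psi_{\mathrm z}}\otimes b$ for an inner product $b$ on $E_0$, and finally takes $E$ to be the completion of $E_0$ under $b$. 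You instead give the classical multiplicity-space proof of the Stone--von Neumann theorem: the Schur inner product on $E=\Hom_{\oH(\rw)}(\omega_{\psi_{\mathrm z}}^\rh,V)$, the isometric evaluation map, and vanishing of the orthogonal complement of its image. The paper's route outsources all the analysis to the smooth theory already set up for Proposition \ref{factor1}; yours is independent of that theory but must carry the full unitary content of Stone--von Neumann itself.

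That is exactly where you should be careful: the input ``every nonzero unitary $\psi_{\mathrm z}$-representation of $\oH(\rw)$ contains a copy of $\omega_{\psi_{\mathrm z}}^\rh$'' is strictly stronger than the uniqueness-for-irreducibles statement, and neither justification you sketch quite delivers it. Zorn's lemma applied to orthogonal families of copies of $\omega_{\psi_{\mathrm z}}^\rh$ only reduces the problem to a complement containing no copy, which is the very existence statement at issue, so on its own it is circular; and a direct integral decomposition into irreducibles requires separability, which is not assumed for objects of $\CH\mathbf{mod}_{\oH(\rw),\psi_{\mathrm z}}$ (indeed the lemma is supposed to give an equivalence with the category of \emph{all} Hilbert spaces). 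A complete general argument is available from the same source the paper uses for Lemma \ref{kernel}: the twisted convolution algebra $\con^{\,\varsigma}(\rw)$ acts on $\omega_{\psi_{\mathrm z}}^\rh$ by a dense algebra of smoothing (in particular compact) operators containing rank-one self-adjoint idempotents $p$; in any nonzero unitary $\psi_{\mathrm z}$-representation the operator $\pi(p)$ is nonzero (else $\pi$ kills the dense ideal generated by $p$ and hence the whole representation), and the cyclic subspace of any unit vector in its range is unitarily equivalent to $\omega_{\psi_{\mathrm z}}^\rh$ by a matrix-coefficient computation. Equivalently one may appeal to square-integrability of $\omega_{\psi_{\mathrm z}}^\rh$ modulo the center and the orthogonality relations. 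With that input supplied, your proof is complete and valid without separability assumptions.
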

\begin{proof}
This is well known. We provide a proof for completeness. By
Proposition \ref{factor1}, the smoothing $V_\infty$ has the form
$\omega_{\psi_{\mathrm z}}\widehat{\otimes} E_0$, where $E_0$ is a
certain  Fr\'{e}chet space. By Lemma \ref{factorf}, the restriction
of the inner product on $V$ to $\omega_{\psi_{\mathrm
z}}\widehat{\otimes} E_0$ has the form
$\la\,,\,\ra_{\psi_{\mathrm{z}}}\otimes b$, where $b$ is an inner
product on $E_0$. Denote by $E$ the completion of $E_0$ with respect
to $b$. This is a Hilbert space and we have that
$V=\omega_{\psi_{\mathrm z}}^\rh\widehat{\otimes}_\rh E$.

\end{proof}

For every unitary representation $V$ in
$\CH\mathbf{mod}_{\oH(\mathrm{w}),\psi_\mathrm{z}}$, with inner
product $\la\,,\,\ra_V$, define an inner product on $\Hom_{\oH(\rw)}
(\omega_{\psi}^{\th},V)$ by
\begin{equation}\label{inner}
  \la \phi, \phi'\ra:={\phi'}^*\circ\phi\in \Hom_{\oH(\rw)}
  (\omega_{\psi_{\mathrm z}}^\rh,\omega_{\psi_{\mathrm z}}^\rh)=\C,
\end{equation}
where ${\phi'}^*\in\Hom_{\oH(\rw)} (V,\omega_{\psi_{\mathrm z}}^\rh)$ is
determined by the formula
\[
  \la \phi'(u), v\ra_V=\la u, \phi'^*(v)\ra_{\psi_\rz}, \quad u\in \omega_{\psi_{\mathrm z}}^\rh,\, v\in V.
\]

The following analog of Proposition \ref{factor1} is an easy
consequence of Lemma \ref{factorh}.
\begin{prpp}\label{factorh2}
For every representation $V$ in
$\CH\mathbf{mod}_{\oH(\rw),\psi_\rz}$,
$\Hom_{\oH(\rw)}(\omega_{\psi_\rz}^\rh, V)$ is a Hilbert space under
the inner product \eqref{inner}. For every Hilbert space $E$,
$\omega_{\psi_\mathrm{z}}^\rh\widehat{\otimes}_\rh E$ is a
representation in $\CH\mathbf{mod}_{\oH(\rw),\psi_\rz}$. Further
more, one has inner product preserving identifications
\[
  \omega_{\psi_\mathrm{z}}^\rh\widehat{\otimes}_\rh \, \Hom_{\oH(\mathrm{w})}(\omega_{\psi_\mathrm{z}}^\rh,
  V)=V,
\]
and
\[
   \Hom_{\oH(\mathrm{w})}(\omega_{\psi_\mathrm{z}}^\rh,\omega_{\psi_\mathrm{z}}^\rh\widehat{\otimes}_\rh
   E)=E.
\]
Consequently, the functors
\[
\Hom_{\oH(W)}(\omega_{\psi_\mathrm{z}}^\rh,\,\cdot)\,
\quad\textrm{and}\quad
\omega_{\psi_\mathrm{z}}^\rh\widehat{\otimes}_\rh \,\cdot
 \]
are mutually inverse equivalences of categories between the category
$\CH\mathbf{mod}_{\oH(\mathrm{w}),\psi_\mathrm{z}}$ and the category
of Hilbert spaces.
\end{prpp}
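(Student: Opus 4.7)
The plan is to derive Proposition \ref{factorh2} as a formal consequence of Lemma \ref{factorh}, so it suffices to analyze the model case $V = \omega_{\psi_\rz}^\rh \widehat{\otimes}_\rh E$ for an arbitrary Hilbert space $E$. Concretely, I will construct the natural map
\[
\iota_E : E \longrightarrow \Hom_{\oH(\rw)}(\omega_{\psi_\rz}^\rh,\, \omega_{\psi_\rz}^\rh \widehat{\otimes}_\rh E), \qquad e \mapsto (u \mapsto u \otimes e),
\]
and show that it is an isometric isomorphism of Hilbert spaces, where the right-hand side carries the inner product \eqref{inner}. Once this is established, both identifications in the proposition fall out: the second is $\iota_E$ itself, and the first is obtained by first invoking Lemma \ref{factorh} to write $V \cong \omega_{\psi_\rz}^\rh \widehat{\otimes}_\rh E$ unitarily, then applying $\iota_E^{-1}$ to identify $\Hom_{\oH(\rw)}(\omega_{\psi_\rz}^\rh, V)$ with $E$, and tensoring back to recover $V$. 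In particular, completeness of the $\Hom$ space as a Hilbert space will be automatic from the isometry.

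For the model case, $\iota_E$ is obviously a bounded injection. Surjectivity comes from a Schur-plus-orthonormal-basis argument: pick an orthonormal basis $\{e_i\}$ of $E$ and decompose $\omega_{\psi_\rz}^\rh \widehat{\otimes}_\rh E$ as the Hilbert direct sum $\widehat{\bigoplus}_i \, \omega_{\psi_\rz}^\rh \otimes e_i$. Given $\phi$ in the $\Hom$ space, the composition of $\phi$ with the $i$-th orthogonal projection is an $\oH(\rw)$-intertwining endomorphism of $\omega_{\psi_\rz}^\rh$, hence a scalar $\lambda_i$ by Schur's lemma applied to the irreducible unitary representation $\omega_{\psi_\rz}^\rh$. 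The operator norm bound forces $\sum_i |\lambda_i|^2 \leq \|\phi\|_{\mathrm{op}}^2 < \infty$, so $e := \sum_i \lambda_i e_i$ is a well-defined vector in $E$ satisfying $\iota_E(e) = \phi$. The isometry property is then a direct computation: for $\phi = \iota_E(e)$ and $\phi' = \iota_E(e')$, the adjoint ${\phi'}^*$ sends $u \otimes e_i$ to $\la e', e_i \ra_E \cdot u$, and composing yields ${\phi'}^* \circ \phi = \la e, e'\ra_E \cdot \mathrm{id}$, which under the normalization $\la \mathrm{id}, \mathrm{id}\ra = 1$ gives $\la \phi, \phi'\ra = \la e, e'\ra_E$. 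The equivalence of categories is then formal, the unit and counit of adjunction being natural isomorphisms by the two identifications just established.

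The main obstacle I expect is the bookkeeping around the inner product \eqref{inner} and the identification $\Hom_{\oH(\rw)}(\omega_{\psi_\rz}^\rh, \omega_{\psi_\rz}^\rh) \cong \C$. The definition of \eqref{inner} uses the adjoint with respect to the inner products on both $\omega_{\psi_\rz}^\rh$ and $V$, and the Schur-type identification of this $\Hom$ space with $\C$ depends implicitly on the chosen inner product on $\omega_{\psi_\rz}^\rh$ (equivalently, on taking the identity endomorphism as a unit vector). Getting these normalizations consistent, so that \eqref{inner} genuinely matches the intrinsic inner product on $E$ under $\iota_E$, is where most of the care is required, although no deep input beyond Lemma \ref{factorh} and Schur's lemma is needed.
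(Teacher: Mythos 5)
Your proposal is correct and follows essentially the same route as the paper, which simply declares Proposition \ref{factorh2} an easy consequence of Lemma \ref{factorh}: you reduce to the model case $V=\omega_{\psi_\rz}^\rh\widehat{\otimes}_\rh E$ via that lemma and then verify the two identifications there. Your Schur's-lemma-plus-orthonormal-basis computation just fills in the routine model-case details that the paper leaves implicit, and it is sound.
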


\section{Representations of Jacobi groups}\label{jacobi}

Let $G$ be a real Jacobi group, and we return to the notation of the Introduction. This section is devoted to a proof of Theorem \ref{thm0}. Without lose of generality, we assume in this section that $\psi$ has a discrete kernel. Then $Z$ is either trivial or one dimensional. If it is trivial, then $H$ is trivial and Theorem \ref{thm0} is also trivial. So further assume in this section that $Z$ is one dimensional. Then $H$ is a Heisenberg group as in last section.

\subsection{Splitting Jacobi groups}

Fix a subspace $\rw$ of the Lie algebra $\h$ which is
complementary to $\z$. Such a space is unique up to conjugations by
$H$. It determines a Nash splitting $i_L: L\rightarrow G$ of the quotient map $G\rightarrow L$ so that $i_L(L)$ stabilizes $\rw$ under the adjoint action (cf. \cite[Theorem 7.1]{Mo}). Identify $L$ with $i_L(L)$, then we have
\[
  G=L\ltimes H.
\]

As in the Introduction, $\rw\cong\h/\z$ is a symplectic space under
the Lie bracket
\[
  [\,,\,]: \mathrm{w}\times \mathrm{w}\rightarrow \z.
\]
We let the symplectic group $\Sp(\mathrm{w})$ acts on $H$ as group
automorphisms so that it point-wise fixes $Z$ and induces the natural action on
$\mathrm{w}$. Let $\widetilde \Sp(\rw)$ acts on $H$ through the covering map $\widetilde \Sp(\rw)\rightarrow \Sp(\rw)$. Then the following semidirect product is a real Jacobi group:
\[
  \widetilde \oJ(\rw):=\widetilde{\Sp}(\mathrm{w})\ltimes H.
\]
The adjoint action induces a homomorphism
$L\rightarrow \Sp(\rw)$. Recall from the Introduction that
\[
  \widetilde{L}:=L\times_{\Sp(\mathrm{w})}\widetilde{\Sp}(\mathrm{w})
\]
is a double cover of $L$. The double cover
\[
  \widetilde G:=G\times_{L}\widetilde L =\widetilde L\ltimes H
\]
of $G$ is obviously mapped to both $\widetilde \oJ(\rw)$ and
$\widetilde L$. In this way, we view it as a subgroup of the product
$\widetilde \oJ(\rw)\times \widetilde L$.

\subsection{The oscillator representation}
To distinguish representations of different groups, we write $\pi|_S$ to emphasize that $\pi$ is viewed as a representation of a group $S$. Denote by $\omega_\psi^\rh|_{\widetilde \oJ(\rw)}$ the unitary  oscillator representation of $\widetilde \oJ(\rw)$ corresponding to $\psi$. Up
to isomorphism, this is the only genuine unitary $\psi$-representation of it which remains irreducible when restricted to $H$. Without lose of generality, assume that the representation $\omega_\psi^\rh$ of $\widetilde G$ in the introduction coincides with the pull back of $\omega_\psi^\rh|_{\widetilde \oJ(\rw)}$ to $\widetilde G$.

The following lemma is well know (see \cite[Section 2]{Ad}, for
example).
\begin{lem}\label{dosc}
The universal enveloping algebras $\oU(\s\p(\rw)_\C\ltimes \h_\C)$
and $\oU(\h_\C)$ produce the same subalgebra of
$\End((\omega_\psi^\rh|_{\widetilde \oJ(\rw)})_\infty)$.
\end{lem}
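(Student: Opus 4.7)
The plan is to reduce to the classical realization of the oscillator representation on Schwartz functions and use the fact that $\mathfrak{sp}(\rw)_{\C}$ acts by symmetric degree-$2$ polynomials in the Heisenberg generators. One inclusion
\[
  \mathrm{image}\,\oU(\h_\C)\;\subset\;\mathrm{image}\,\oU(\s\p(\rw)_\C\ltimes\h_\C)
\]
inside $\End((\omega_\psi^\rh|_{\widetilde{\oJ}(\rw)})_\infty)$ is automatic from $\h_\C\subset\s\p(\rw)_\C\ltimes\h_\C$. The real content is the reverse inclusion, and it is enough to check it for a basis of $\s\p(\rw)_\C$, because $\h_\C$ already sits in the image of $\oU(\h_\C)$ and the two together generate the bigger enveloping algebra.

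First, I would fix a Lagrangian decomposition $\rw_\C=\rk\oplus\rk^{*}$ and realize $\omega_\psi^\rh|_{\widetilde{\oJ}(\rw)}$, after Stone--von Neumann, on the Fock/Schwartz model attached to $\rk$. In this model the center $\z$ acts by $d\psi$, and the smooth vectors $(\omega_\psi^\rh)_\infty$ are (up to a canonical topological isomorphism) the Schwartz space on a real form of $\rk$. The action of $\rw\subset\h$ on smooth vectors is by the standard first-order operators: elements of $\rk$ act by multiplication-type operators and elements of $\rk^{*}$ act by (constant multiples of) directional derivatives. In particular the composition $\omega_\psi(v)\omega_\psi(v')$ of any two such operators is a well-defined continuous endomorphism of $(\omega_\psi^\rh)_\infty$.

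Second, I would use the standard identification $\s\p(\rw)_\C\cong\mathrm{Sym}^2(\rw_\C)/(\text{center relation})$ coming from the moment map of the Heisenberg symplectic form, and recall the classical formula expressing the derived oscillator action on $\mathrm{Sym}^2(\rw_\C)$: for $v,v'\in\rw_\C$, the element $v\cdot v'\in\mathrm{Sym}^2(\rw_\C)$ acts on $(\omega_\psi^\rh)_\infty$ as
\[
  \tfrac{1}{2}\bigl(\omega_\psi(v)\omega_\psi(v')+\omega_\psi(v')\omega_\psi(v)\bigr),
\]
which visibly lies in the image of $\oU(\h_\C)$. This is precisely the content of the references pointed to in \cite[Section 2]{Ad}; what I would actually do is either quote this or derive it by differentiating the $\widetilde{\Sp}(\rw)$-action on the Fock model and matching it against the above quadratic expression in the Heisenberg operators.

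Third, once every element of $\s\p(\rw)_\C$ acts as an element of the image of $\oU(\h_\C)$, together with $\h_\C$ itself (which acts trivially as itself via the inclusion), the whole Lie algebra $\s\p(\rw)_\C\ltimes\h_\C$ acts through $\mathrm{image}\,\oU(\h_\C)$; since $\mathrm{image}\,\oU(\h_\C)$ is an associative subalgebra of $\End((\omega_\psi^\rh)_\infty)$, the image of the full enveloping algebra $\oU(\s\p(\rw)_\C\ltimes\h_\C)$ is contained in it, giving the opposite inclusion and hence equality. The one technical point to be careful about is that all compositions of Heisenberg operators indeed preserve the smooth vectors and give continuous endomorphisms in the Fréchet topology of $(\omega_\psi^\rh)_\infty$; this is standard (multiplication and differentiation preserve the Schwartz space), but it is the only step where one must avoid sloppy formal computations. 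The main obstacle, in short, is not any of the three steps individually but the verification of the quadratic formula, which I would either invoke from the literature or recover by a short infinitesimal computation in the Fock model.
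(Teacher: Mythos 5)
Your proof is correct and is essentially the argument the paper has in mind: the paper offers no proof of Lemma \ref{dosc} beyond citing it as well known (referring to \cite[Section 2]{Ad}), and your reduction to the Schr\"odinger/Fock model together with the symmetrized quadratic formula for the derived $\s\p(\rw)_\C$-action is exactly the standard content of that reference. The only inessential blemish is the phrase ``$\Sym^2(\rw_\C)/(\text{center relation})$'' --- the identification $\s\p(\rw)_\C\cong\Sym^2(\rw_\C)$ needs no quotient --- which does not affect the argument.
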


Lemma \ref{subsmooth} then implies that
\[
  \omega_\psi:=(\omega_\psi^\rh|_{\widetilde G})_\infty=(\omega_\psi^\rh|_{\widetilde \oJ(\rw)})_\infty=(\omega_\psi^\rh|_H)_\infty
\]
as Fr\'{e}chet spaces.

\subsection{Equivalences of categories}
Recall from the Introduction the categories
\[
  \CH\mathbf{mod}_{G,\psi},\,\CJ\mathbf{mod}_{G,\psi},\,\CH\mathbf{mod}_{\widetilde{L},\textrm{gen}},\,\textrm{
  and }\CJ\mathbf{mod}_{\widetilde{L},\textrm{gen}}.
\]
Given any representation $V$ in $\CH\mathbf{mod}_{G,\psi}$, recall
from the last section that  $\Hom_{H}(\omega_{\psi}^\rh,V)$ is a
Hilbert space. Let $\widetilde G$ act on it by
\[
  (\tilde g.\phi)(x):=g(\phi(\tilde g^{-1}x)), \quad \tilde g\in
  \widetilde G,\, \phi\in \Hom_{H}(\omega_{\psi}^\rh,V), \,x\in
  \omega_{\psi}^\rh,
\]
where $g$ is the image of $\tilde g$ under the quotient map
$\widetilde{G}\rightarrow G$. This action is checked to be
continuous (use Proposition \ref{factorh2}) and descends to a genuine representation of $\widetilde L$. Therefore we get a functor
\[
  \Hom_{H}(\omega_{\psi}^\rh,\cdot): \CH\mathbf{mod}_{G,\psi}\rightarrow
\CH\mathbf{mod}_{\widetilde{L},\textrm{gen}}.
 \]

On the other hand, given any representation $E$ in
$\CH\mathbf{mod}_{\widetilde{L},\textrm{gen}}$, the tensor product
$\omega_{\psi}^\rh\widehat{\otimes}_\rh E$ is a Hilbert space
representation of $\widetilde \oJ(\rw)\times \widetilde L$. Its
restriction to $\widetilde G$ descends to a representation of $G$.
In this way, we get a functor
\[
  \omega_{\psi}^\rh\widehat{\otimes}_\rh\,\cdot: \CH\mathbf{mod}_{\widetilde{L},\textrm{gen}}\rightarrow
\CH\mathbf{mod}_{G,\psi}.
\]

Similarly, we have functors
\begin{equation}\label{equi1}
  \Hom_{H}(\omega_{\psi},\cdot): \CJ\mathbf{mod}_{G,\psi}\rightarrow
\CJ\mathbf{mod}_{\widetilde{L},\textrm{gen}}
 \end{equation}
and
\begin{equation}\label{equi2}
  \omega_{\psi}\widehat{\otimes}\,\cdot: \CJ\mathbf{mod}_{\widetilde{L},\textrm{gen}}\rightarrow
\CJ\mathbf{mod}_{G,\psi}.
\end{equation}

\begin{prpl}\label{factor2}
The functors
\[
 \Hom_{H}(\omega_{\psi}^\rh,\,\cdot)\quad \textrm{and}\quad  \omega_{\psi}^\rh\widehat{\otimes}_\rh\,\cdot  \quad
 \]
are mutually inverse equivalences of categories between
$\CH\mathbf{mod}_{G,\psi}$ and
$\CH\mathbf{mod}_{\widetilde{L},\textrm{gen}}$. Similarly, the
functors
\[
 \Hom_{H}(\omega_{\psi},\,\cdot)\quad \textrm{and}\quad  \omega_{\psi}\widehat{\otimes}\,\cdot  \quad
 \]
are mutually inverse equivalences of categories between
$\CJ\mathbf{mod}_{G,\psi}$ and
$\CJ\mathbf{mod}_{\widetilde{L},\textrm{gen}}$.
\end{prpl}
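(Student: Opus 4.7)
The plan is to reduce Proposition \ref{factor2} to the Heisenberg group analogues (Propositions \ref{factorh2} and \ref{factor1}), exploiting the semidirect-product decomposition $\widetilde G = \widetilde L \ltimes H$ together with the embedding $\widetilde G \hookrightarrow \widetilde \oJ(\rw) \times \widetilde L$ and the $\widetilde \oJ(\rw)$-equivariance built into the oscillator representation.

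For the unitary case, given $V \in \CH\mathbf{mod}_{G,\psi}$, I would first pull back to a unitary $\psi$-representation of $\widetilde G$ and apply Proposition \ref{factorh2} to $V|_H$, which yields a Hilbert space $E := \Hom_H(\omega_\psi^\rh, V)$ and an $H$-equivariant unitary isomorphism $V \cong \omega_\psi^\rh \widehat\otimes_\rh E$. Using the Hom formula already recorded in the excerpt, $\widetilde G$ acts on $E$; I would verify that elements of $H$ act trivially (by $H$-equivariance of $\phi$) so the action factors through $\widetilde L$, and that the nontrivial element of $\ker(\widetilde G \to G)$ acts as $-1$ (because $\omega_\psi^\rh$ is genuine), placing $E$ in $\CH\mathbf{mod}_{\widetilde L, \textrm{gen}}$. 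In the reverse direction, for $E \in \CH\mathbf{mod}_{\widetilde L, \textrm{gen}}$, the tensor product $\omega_\psi^\rh \widehat\otimes_\rh E$ carries a natural unitary action of $\widetilde \oJ(\rw) \times \widetilde L$; its restriction along $\widetilde G \hookrightarrow \widetilde \oJ(\rw) \times \widetilde L$ has the covering kernel acting by $(-1)\cdot(-1) = 1$, so it descends to a representation of $G$. The two compositions are then mutually inverse by Proposition \ref{factorh2} at the $H$-level, and the $L$- / $\widetilde L$-equivariance of the identifications is tautological from the construction.

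For the smooth Fr\'echet case, I would run the parallel argument with Proposition \ref{factor1} in place of Proposition \ref{factorh2}. Starting from $V \in \CJ\mathbf{mod}_{G,\psi}$, one obtains $E = \Hom_H(\omega_\psi, V)$ as a Fr\'echet space with $V = \omega_\psi \widehat\otimes E$ as smooth $H$-representations, and defines the $\widetilde L$-action on $E$ by the same Hom formula. Smoothness of this action on $E$ follows from smoothness of $V$ together with the observation, afforded by Lemma \ref{dosc} and Lemma \ref{subsmooth}, that $\omega_\psi$ is already the smoothing of $\omega_\psi^\rh|_{\widetilde \oJ(\rw)}$. Conversely, for $E \in \CJ\mathbf{mod}_{\widetilde L, \textrm{gen}}$, the tensor product $\omega_\psi \widehat\otimes E$ is a smooth Fr\'echet $\widetilde \oJ(\rw) \times \widetilde L$-representation by Lemma \ref{smooth3}, and descends to $G$ as above.

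The main obstacle I expect is the moderate growth condition in the smooth case: one must show that $V \in \CJ\mathbf{mod}_{G,\psi}$ produces an $E$ of moderate growth as a $\widetilde L$-representation, and that conversely $\omega_\psi \widehat\otimes E$ is of moderate growth over $G$ when $E$ is over $\widetilde L$. Since $\omega_\psi$ has well-understood polynomial growth bounds under $\widetilde \oJ(\rw)$, and hence under the image of $\widetilde L$, the forward direction reduces to translating growth seminorms on $V$ through the identification $V = \omega_\psi \widehat\otimes E$, while the backward direction reduces to combining the bounds on $\omega_\psi$ and on $E$ along the semidirect decomposition. Once moderate growth is secured on both sides, the equivalence of categories follows from the Heisenberg results together with the tautological matching of the $\widetilde L$-actions.
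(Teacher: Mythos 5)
Your proposal is correct and follows essentially the same route as the paper: reduce to the Heisenberg-group results (Propositions \ref{factorh2} and \ref{factor1}) via the splitting $\widetilde G=\widetilde L\ltimes H$ and the embedding $\widetilde G\hookrightarrow\widetilde\oJ(\rw)\times\widetilde L$, then check that the identifications are equivariant --- which the paper dismisses as routine, while you spell out the genuineness, descent, and moderate-growth points. The only slight imprecision is calling the $\widetilde\oJ(\rw)\times\widetilde L$-action on $\omega_{\psi}^\rh\widehat{\otimes}_\rh E$ ``unitary'': objects of $\CH\mathbf{mod}_{\widetilde L,\textrm{gen}}$ need not be unitary, but the Hilbert tensor product still carries a continuous Hilbertizable action, so the argument is unaffected.
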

\begin{proof}
In view of Proposition \ref{factorh2}, to prove the first assertion,
it suffices to show that the identification
\[
   \Hom_{\oH(W)}(\omega_{\psi}^\rh,\omega_{\psi}^\rh\widehat{\otimes}_\rh
   E)=E
\]
respects the $\widetilde L$-actions, and the identification
\[
  \omega_{\psi}^\rh\widehat{\otimes}_\rh \, \Hom_{\oH(W)}(\omega_{\psi}^\rh,
  V)=V
\]
respects the $G$-actions. This is routine to check and we will not
go to the details. The second assertion is proved similarly.
\end{proof}

\begin{corl}\label{modg}
Every representation in $\CH\mathbf{mod}_{G,\psi}$ is of moderate
growth.
\end{corl}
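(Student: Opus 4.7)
The plan is to reduce the statement to the standard fact that continuous Hilbert representations of a reductive Nash group are automatically of moderate growth, via the Hilbert-side equivalence of Proposition \ref{factor2}.

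First, using Proposition \ref{factor2} I would write an arbitrary $V$ in $\CH\mathbf{mod}_{G,\psi}$ as
\[
  V \;=\; \omega_\psi^\rh \,\widehat{\otimes}_\rh\, E, \qquad E \,:=\, \Hom_H(\omega_\psi^\rh, V) \,\in\, \CH\mathbf{mod}_{\widetilde L,\textrm{gen}}.
\]
The $G$-action on $V$ is the descent of the $\widetilde G$-action induced by the embedding $\widetilde G \hookrightarrow \widetilde \oJ(\rw) \times \widetilde L$ constructed earlier in this section.

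Second, since $\omega_\psi^\rh$ extends to a unitary representation of the ambient group $\widetilde \oJ(\rw)$, every element of the $\widetilde \oJ(\rw)$-factor acts on $\omega_\psi^\rh$ with operator norm exactly $1$. Consequently, for $g \in G$ and any lift $\tilde g \in \widetilde G$ projecting to $\tilde \ell \in \widetilde L$, the operator norm of $g$ on the Hilbert tensor product $V$ is bounded above by the operator norm of $\tilde \ell$ on $E$.

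Third, the main input is the standard fact (due to Harish-Chandra and Wallach in the real reductive setting, and available in the Nash category through du Cloux's work \cite{du88,du89,du91}) that every continuous Hilbert space representation of a reductive Nash group is of moderate growth. Applied to $E$ and $\widetilde L$, this produces a positive Nash function $\phi_E$ on $\widetilde L$ with $\|\tilde \ell\|_{E\to E} \leq \phi_E(\tilde \ell)$. After replacing $\phi_E(\tilde\ell)$ with the symmetric average $\phi_E(\tilde\ell)+\phi_E(-\tilde\ell)$, $\phi_E$ is invariant under the two-element kernel of $\widetilde L \to L$, hence descends to a Nash function on $L$ and pulls back along $G \twoheadrightarrow L$ to a positive Nash function $\phi$ on $G$. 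Combined with step two, this gives $\|g\cdot v\|_V \leq \phi(g)\,\|v\|_V$ for all $g\in G$ and $v\in V$. Since $V$ is a Hilbert space, bounding the operator norm by a moderate growth function is equivalent to the required bound on every continuous seminorm, so the corollary follows. The only real obstacle is importing the moderate growth property for the reductive group $\widetilde L$; granting it, the rest is a direct bookkeeping through the tensor product structure.
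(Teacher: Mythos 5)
Your proposal is correct and follows essentially the same route as the paper: write $V=\omega_\psi^\rh\widehat{\otimes}_\rh E$ via Proposition \ref{factor2} and invoke the standard fact (the paper cites \cite[Lemma 11.5.1]{Wa}) that Banach representations of a reductive Nash group have moderate growth. Your extra steps (operator norm $1$ on the unitary factor, descending the Nash bound from $\widetilde L$ to $L$ and pulling back to $G$) are just the bookkeeping the paper leaves implicit.
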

\begin{proof}
Since every representation in $\CH\mathbf{mod}_{G,\psi}$ has the
form $\omega_{\psi}^\rh\widehat{\otimes}_\rh
   E$ ($E$ is a representation in $\CH\mathbf{mod}_{\widetilde{L},\textrm{gen}}$), this follows from the well know fact that every Banach space
   representation of a reductive Nash group is of moderate growth (cf. \cite[Lemma 11.5.1]{Wa}).
\end{proof}

\subsection{Smoothing}

Recall that $\widetilde G$ is a Lie subgroup of $\widetilde
\oJ(\rw)\times \widetilde L$, and therefore $\g_\C$ is a Lie
subalgebra of $(\s\p(\rw)_\C\ltimes \h_\C)\times \l_\C$.

Let $E$ be a representation in
$\CH\mathbf{mod}_{\widetilde{L},\textrm{gen}}$. Write
\[
 V^\circ:=\omega_{\psi}^\rh|_{\widetilde \oJ(\rw)}\widehat{\otimes}_\rh E,
\]
viewed as a representation of $\widetilde \oJ(\rw)\times \widetilde
L$. The descent to $G$ of its restriction to $\widetilde G$ is
denoted by $V$. Lemma \ref{hs} and Lemma \ref{tensor0} implies that
\[
  V^\circ_\infty=(\omega_{\psi}^\rh|_{\widetilde \oJ(\rw)})_\infty\widehat{\otimes}_\rh E_\infty=\omega_{\psi}|_{\widetilde \oJ(\rw)}\widehat{\otimes}_\rh E_\infty=\omega_{\psi}|_{\widetilde \oJ(\rw)}\widehat{\otimes}
  E_\infty.
\]

\begin{leml}\label{subalge}
The universal enveloping algebras $\oU((\s\p(\rw)_\C\ltimes
\h_\C)\times \l_\C)$ and $\oU(\g_\C)$ produce the same subalgebra of
$\End(V^\circ_\infty)$.

\end{leml}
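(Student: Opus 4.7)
The plan is to unwind the embedding $\g_\C \hookrightarrow (\s\p(\rw)_\C \ltimes \h_\C) \times \l_\C$ and then transfer the non-trivial information (i.e., that the $\s\p(\rw)_\C$-action on the first tensor factor is already encoded by the $\h_\C$-action) from Lemma \ref{dosc} to the tensor product $V^\circ_\infty = \omega_\psi|_{\widetilde\oJ(\rw)}\,\widehat\otimes\, E_\infty$.

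First I would describe the embedding explicitly. Writing $G = L\ltimes H$ (and $\widetilde G = \widetilde L\ltimes H$), the inclusion $\widetilde G\hookrightarrow \widetilde\oJ(\rw)\times\widetilde L$ is $(\tilde l,h)\mapsto((\pi(\tilde l),h),\tilde l)$, where $\pi:\widetilde L\to\widetilde\Sp(\rw)$ is the natural map. On the level of Lie algebras this gives, for $X\in\l_\C$ and $Y\in\h_\C$, the formulas
\[
  Y\;\longmapsto\;\omega_\psi(Y)\otimes 1,\qquad X\;\longmapsto\;\omega_\psi(\pi(X))\otimes 1\,+\,1\otimes E(X),
\]
as operators on $V^\circ_\infty$, where I am abusing notation to write $\omega_\psi$ and $E$ for the infinitesimal actions.

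Next, the inclusion $\oU(\g_\C)\subset\oU((\s\p(\rw)_\C\ltimes\h_\C)\times\l_\C)$ is tautological, so only the reverse containment of the generated subalgebras of $\End(V^\circ_\infty)$ requires proof. The ambient algebra is generated by three families of operators: $\omega_\psi(Y)\otimes 1$ for $Y\in\h_\C$, $\omega_\psi(Z)\otimes 1$ for $Z\in\s\p(\rw)_\C$, and $1\otimes E(X)$ for $X\in\l_\C$. The first family already lies in $\oU(\g_\C)$. For the second family, I would apply Lemma \ref{dosc}: on $(\omega_\psi^\rh|_{\widetilde\oJ(\rw)})_\infty = \omega_\psi|_{\widetilde\oJ(\rw)}$, every operator coming from $\oU(\s\p(\rw)_\C\ltimes\h_\C)$ already lies in the image of $\oU(\h_\C)$. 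Tensoring with the identity on $E_\infty$ shows that each $\omega_\psi(Z)\otimes 1$ belongs to the subalgebra of $\End(V^\circ_\infty)$ generated by the operators $\omega_\psi(Y)\otimes 1$, $Y\in\h_\C$, and hence to the subalgebra generated by $\oU(\g_\C)$.

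Finally, for the third family, the formula above gives
\[
  1\otimes E(X) \;=\; X\;-\;\omega_\psi(\pi(X))\otimes 1,
\]
and both terms on the right lie in the subalgebra generated by $\oU(\g_\C)$ by what we have just shown. This yields the opposite inclusion and completes the proof. The only delicate point is verifying that Lemma \ref{dosc}, stated for the factor $\omega_\psi|_{\widetilde\oJ(\rw)}$, indeed transports to the completed tensor product $\omega_\psi|_{\widetilde\oJ(\rw)}\,\widehat\otimes\, E_\infty$; this is immediate because the ``$\s\p(\rw)_\C$-part'' of any element of $\oU((\s\p(\rw)_\C\ltimes\h_\C)\times\l_\C)$ acts on $V^\circ_\infty$ as an operator of the form $T\otimes 1$, and the identification of Lemma \ref{dosc} takes place purely inside $\End(\omega_\psi|_{\widetilde\oJ(\rw)})$ before tensoring. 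There is no substantive obstacle beyond keeping the bookkeeping of the three Lie subalgebras straight.
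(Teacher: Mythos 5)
Your proof is correct and is essentially the paper's own argument: the paper likewise uses Lemma \ref{dosc} to get $\rho(\s\p(\rw)_\C)\subset\rho(\oU(\h_\C))\subset\rho(\oU(\g_\C))$ on $V^\circ_\infty$ and then invokes the decomposition $(\s\p(\rw)_\C\ltimes \h_\C)\times \l_\C=\s\p(\rw)_\C+\g_\C$, which is exactly what your identity $1\otimes E(X)=X-\omega_\psi(\pi(X))\otimes 1$ expresses elementwise. Your extra bookkeeping (the explicit embedding and the $T\otimes 1$ remark) just makes explicit what the paper leaves implicit.
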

\begin{proof}
Write
\[
  \rho:\oU((\s\p(\rw)_\C\ltimes \h_\C)\times \l_\C)\rightarrow \End(V^\circ_\infty)
\]
for the Lie algebra action. Lemma \ref{dosc} implies that
\[
 \rho(\s\p(\rw)_\C)\subset \rho(\oU(\h_\C))\subset  \rho(\oU(\g_\C)).
\]
The lemma then follows by noting that
\[
  (\s\p(\rw)_\C\ltimes \h_\C)\times \l_\C=\s\p(\rw)_\C+\g_\C.
\]
\end{proof}

Finally, by Lemma \ref{subsmooth}, we conclude that
\[
  V_\infty=V^\circ_\infty=\omega_{\psi}\widehat{\otimes}
  E_\infty.
\]
Together with Proposition \ref{factor2}, this finishes the proof of
Theorem \ref{thm0}.

\section{Casselman-Wallach $\psi$-representations and generalized matrix coefficients}\label{matrix}

We continue to use the notation of the Introduction and assume that $G$ is a real Jacobi group.

\subsection{Casselman-Wallach $\psi$-representations}

Recall from the Introduction that $\CF\CH_{G,\psi}$ is the full subcategory of $\CJ\mathbf{mod}_{G,\psi}$ corresponding (under the functors \eqref{fu3} and \eqref{fu4})
to the subcategory $\CF\CH_{\widetilde{L},\textrm{gen}}$ of
$\CJ\mathbf{mod}_{\widetilde{L},\textrm{gen}}$. Objects in $\CF\CH_{G,\psi}$ are called Casselman-Wallach $\psi$-representations of $G$.

The following lemma is well known and is an easy consequenc of Casselman-Wallach's Theorem
(\cite[Corollary 11.6.8]{Wa}).

\begin{lem}\label{em1}
Every injective homomorphism $\phi: E\rightarrow F$ in the category
$\CF\CH_{\widetilde{L},\textrm{gen}}$ is a topological embedding
with closed image.
\end{lem}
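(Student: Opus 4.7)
The plan is to reduce the assertion to the analogous statement for Harish-Chandra modules via Casselman-Wallach's equivalence of categories, which is precisely the content of the cited \cite[Corollary 11.6.8]{Wa}.

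First I would fix a maximal compact subgroup $\widetilde{K}$ of $\widetilde{L}$ and pass to $\widetilde{K}$-finite vectors, obtaining an injective $(\widetilde{\l}_\C,\widetilde{K})$-equivariant map $\phi_K: E_K \hookrightarrow F_K$ of admissible finitely generated Harish-Chandra modules. The image $M:=\phi_K(E_K)$ is then a Harish-Chandra submodule of $F_K$. By the Casselman-Wallach theorem, every Harish-Chandra submodule of $F_K$ is the module of $\widetilde{K}$-finite vectors of a unique closed $\widetilde{L}$-invariant subspace $F' \subset F$, and this $F'$ is itself an object of $\CF\CH_{\widetilde{L},\textrm{gen}}$.

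Next, since $\phi_K$ defines an isomorphism $E_K \xrightarrow{\sim} F'_K=M$ of Harish-Chandra modules, the equivalence of categories furnished by Casselman-Wallach produces a unique morphism $\widetilde{\phi}: E \to F'$ in $\CF\CH_{\widetilde{L},\textrm{gen}}$ lifting it, and moreover $\widetilde{\phi}$ is an isomorphism (since its inverse on Harish-Chandra modules also lifts). By the uniqueness of lifts, $\phi$ factors as the composition
\[
  E \xrightarrow{\widetilde{\phi}} F' \hookrightarrow F,
\]
which exhibits $\phi$ as a topological isomorphism onto a closed subspace. This is exactly the conclusion required.

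The only step that is not purely formal is the assertion that Harish-Chandra submodules of $F_K$ correspond to \emph{closed} $\widetilde{L}$-invariant subspaces of $F$ that are again Casselman-Wallach; this is the main substantive input, and it is precisely what Casselman-Wallach's globalization theorem supplies. Everything else is bookkeeping: injectivity of $\phi_K$ follows from injectivity of $\phi$ together with the density of $E_K$ in $E$, and the topological nature of the isomorphism $\widetilde{\phi}$ is automatic from the equivalence of categories (all morphisms in $\CF\CH_{\widetilde{L},\textrm{gen}}$ are automatically continuous and bijective morphisms are automatically topological isomorphisms by the open mapping theorem in Fr\'{e}chet spaces).
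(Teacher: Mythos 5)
Your argument is correct and is exactly the route the paper intends: the paper gives no details, simply declaring the lemma an easy consequence of Casselman--Wallach's theorem (\cite[Corollary 11.6.8]{Wa}), and your passage to $\widetilde{K}$-finite vectors, identification of the image with the Harish--Chandra module of a closed Casselman--Wallach subrepresentation $F'$, and lifting of $E_K\xrightarrow{\sim}F'_K$ to a topological isomorphism $E\to F'$ is precisely the standard deduction being cited. The only point worth stating explicitly is that $\phi(E)\subset F'$ (by density of $E_K$ in $E$, continuity of $\phi$, and closedness of $F'$), which your appeal to uniqueness of lifts implicitly covers.
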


We use the above lemma to show the following
\begin{lem}\label{autot}
Let $U$ be a smooth Fr\'{e}chet representation of $G$ of moderate
growth, and let $V$ be a Casselman-Wallach $\psi$-representation of $G$.  Then
every $G$-intertwining continuous linear map $\phi$ from $U$ to $V$
is a topological homomorphism with closed image.
\end{lem}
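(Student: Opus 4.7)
The plan is to reduce Lemma \ref{autot} to the analogous assertion for the reductive group $\widetilde L$ via the equivalence of categories in Proposition \ref{factor2}, and then to invoke Casselman-Wallach's theorem (cf.\ \cite[Chapter 11]{Wa}).

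First I would reduce to the case where $U\in\CJ\mathbf{mod}_{G,\psi}$. Let $N\subset U$ denote the closure of the linear span of $\{zu-\psi(z)u:z\in Z,\,u\in U\}$. Since $Z$ is central in $G$, the subspace $N$ is closed and $G$-invariant, and the quotient $U/N$ is a smooth Fr\'{e}chet $\psi$-representation of moderate growth, hence an object of $\CJ\mathbf{mod}_{G,\psi}$. Because $V$ is a $\psi$-representation and $\phi$ is $G$-intertwining, $\phi$ vanishes on $N$ and factors as $\phi=\bar\phi\circ q$, where $q:U\to U/N$ is the (open) quotient map. Consequently $\phi$ is a topological homomorphism with closed image if and only if $\bar\phi$ is, so I may replace $U$ by $U/N$ and assume $U\in\CJ\mathbf{mod}_{G,\psi}$.

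Next I would apply Proposition \ref{factor2}. Setting $\widetilde U:=\Hom_H(\omega_\psi,U)\in\CJ\mathbf{mod}_{\widetilde{L},\textrm{gen}}$ and $\widetilde V:=\Hom_H(\omega_\psi,V)\in\CF\CH_{\widetilde{L},\textrm{gen}}$, the map $\phi$ corresponds under the equivalence to a continuous $\widetilde L$-intertwining linear map $\widetilde\phi:\widetilde U\to\widetilde V$, with $\phi=\omega_\psi\,\widehat\otimes\,\widetilde\phi$ under the canonical identifications $U\cong\omega_\psi\,\widehat\otimes\,\widetilde U$ and $V\cong\omega_\psi\,\widehat\otimes\,\widetilde V$. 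The reductive analog of the lemma -- that every continuous $\widetilde L$-intertwining map from a smooth Fr\'{e}chet representation of moderate growth to a Casselman-Wallach representation is a topological homomorphism with closed image -- is a standard strengthening of Lemma \ref{em1}, and, applied to $\widetilde\phi$, shows that $\widetilde\phi$ is a topological homomorphism with closed image. Since $\omega_\psi$ is nuclear Fr\'{e}chet, the functor $\omega_\psi\,\widehat\otimes\,\cdot$ on Fr\'{e}chet spaces is exact and sends closed subspaces to closed subspaces, so it carries topological homomorphisms with closed image to topological homomorphisms with closed image. Applied to $\widetilde\phi$ this yields the desired conclusion for $\phi$.

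The main obstacle is the reductive-group assertion invoked in the previous paragraph: Lemma \ref{em1} only treats injective maps between Casselman-Wallach representations, whereas here the source $\widetilde U$ is merely smooth Fr\'{e}chet of moderate growth. The standard way to bridge this is to take the closure $\widetilde W:=\overline{\widetilde\phi(\widetilde U)}\subset\widetilde V$, observe that $\widetilde W$ is itself Casselman-Wallach (as a closed invariant subspace of one), apply Lemma \ref{em1} to the inclusion $\widetilde W\hookrightarrow\widetilde V$, and then use a density plus automatic-continuity argument (in the spirit of \cite[Section 2]{SZ}) to identify $\widetilde\phi(\widetilde U)$ with $\widetilde W$ and obtain the open mapping property. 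A secondary technical point is to verify the exactness and preservation of closed images under $\omega_\psi\,\widehat\otimes\,\cdot$ on Fr\'{e}chet spaces, which follows from the standard theory of projective tensor products with a nuclear Fr\'{e}chet factor.
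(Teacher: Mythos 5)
Your reduction framework coincides with the paper's: pass to a quotient lying in $\CJ\mathbf{mod}_{G,\psi}$, transfer the problem to $\widetilde L$ through Proposition \ref{factor2}, and return through the topological exactness of the functor $\omega_\psi\widehat{\otimes}\,\cdot\,$ on Fr\'{e}chet spaces (the paper simply quotients by the full kernel $\Ker(\phi)$ at the outset, so that the transferred map is injective, rather than only by your subspace $N$). The gap sits exactly at the point you flag as the main obstacle, and your proposed bridge does not close it. Taking $\widetilde W:=\overline{\widetilde\phi(\widetilde U)}$ and applying Lemma \ref{em1} to the inclusion $\widetilde W\hookrightarrow\widetilde V$ accomplishes nothing: a closed invariant subspace is trivially a topological embedding with closed image. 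The entire content of the ``standard strengthening'' you invoke lies in showing that $\widetilde\phi(\widetilde U)$ equals $\widetilde W$ and that $\widetilde\phi$ is open onto it, and ``a density plus automatic-continuity argument in the spirit of \cite[Section 2]{SZ}'' is not an argument: density of an invariant subspace by itself yields nothing (the underlying Harish-Chandra module is a dense $\oU(\l_\C)$-stable subspace and is in general proper), so one must genuinely use that the dense subspace is the continuous image of a smooth Fr\'{e}chet representation of moderate growth, and that is precisely where admissibility and finite generation have to enter.

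The missing step, which is the heart of the paper's proof, is that after dividing by the kernel the source is itself Casselman--Wallach. Concretely, $\widetilde U/\Ker(\widetilde\phi)$ is a smooth Fr\'{e}chet representation of $\widetilde L$ of moderate growth injecting continuously into the Casselman--Wallach representation $\widetilde V$; applying the Harish-Chandra functor, its Harish-Chandra module becomes a submodule of the admissible, finitely generated, $\oZ$-finite Harish-Chandra module of $\widetilde V$, so $\widetilde U/\Ker(\widetilde\phi)$ lies in $\CF\CH_{\widetilde{L},\textrm{gen}}$, and Lemma \ref{em1} then applies directly to the injective map $\widetilde U/\Ker(\widetilde\phi)\rightarrow\widetilde V$. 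Once this step (or an explicit citation of the corresponding statement for real reductive groups) is inserted, the remainder of your argument, including the transfer back via exactness of $\omega_\psi\widehat{\otimes}\,\cdot\,$, is the paper's proof.
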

\begin{proof}
The map $\phi$ descends to an injective homomorphism
\[
  \phi': U/\Ker(\phi)\rightarrow V
\]
in the category $\CJ\mathbf{mod}_{G,\psi}$. By Proposition
\ref{factor2}, this corresponds to an injective homomorphism
\[
  \phi'': E\rightarrow F
\]
in the category $\CJ\mathbf{mod}_{\widetilde{L},\textrm{gen}}$.
Since $F$ is in $\CF\CH_{\widetilde{L},\textrm{gen}}$, by applying
Harish-Chandra's functor to $\phi''$, we see that $E$ is also in
$\CF\CH_{\widetilde{L},\textrm{gen}}$. Then Lemma \ref{em1} says
that $\phi''$ is a topological embedding with closed image. This
implies that so it $\phi'$, since $\omega_\psi\widehat{\otimes}
\,\cdot\,$ is a topologically exact functor on the category of
Fr\'{e}chet spaces (cf. \cite[Theorem 5.24]{Ta}).
\end{proof}

Now let $U=\omega_{\psi}\widehat \otimes E$ be a representation in
$\CF\CH_{G,\psi}$, and let $V=\bar \omega_{\psi}\widehat \otimes F$
be a representation in $\CF\CH_{G,\bar \psi}$. Here both $E$ and $F$
are representations in $\CF\CH_{\widetilde{L},\textrm{gen}}$. We say
that they are contragredient to each other if there is given a
non-degenerate $G$-invariant continuous bilinear form
\[
   \la\,,\,\ra:U\times V\rightarrow \C.
\]

\begin{prpl}\label{dual}
Every  representation in $\CF\CH_{G,\psi}$ has a unique
contragredient representation in  $\CF\CH_{G,\bar \psi}$.
\end{prpl}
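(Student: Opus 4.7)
The strategy is to reduce the statement to the corresponding well-known existence and uniqueness of Casselman-Wallach contragredients on the reductive group $\widetilde L$, transported to $G$ via the category equivalence of Proposition \ref{factor2} and the pairing decomposition of Lemma \ref{factorf}. So I would begin by writing the given representation as $U=\omega_\psi\widehat\otimes E$ with $E\in \CF\CH_{\widetilde L,\textrm{gen}}$, and let $E'\in \CF\CH_{\widetilde L,\textrm{gen}}$ be the (unique, up to isomorphism) Casselman-Wallach contragredient of $E$, equipped with its canonical non-degenerate $\widetilde L$-invariant pairing $\la\,,\,\ra_{E,E'}:E\times E'\to \C$. Existence and uniqueness of $E'$ is part of standard Casselman-Wallach theory (cf.\ \cite[Chapter 11]{Wa}).

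For existence on the Jacobi side, I would set $V:=\bar\omega_\psi\widehat\otimes E'$, which belongs to $\CF\CH_{G,\bar\psi}$, and equip $U\times V$ with the bilinear form
\[
  \la\,,\,\ra \;:=\; \la\,,\,\ra_{\psi_{\rz}}\otimes \la\,,\,\ra_{E,E'},
\]
where $\la\,,\,\ra_{\psi_{\rz}}:\omega_\psi\times\bar\omega_\psi\to \C$ is the canonical $\widetilde\oJ(\rw)$-invariant pairing fixed in Section 3. Invariance of the first factor under $\widetilde\oJ(\rw)$ and of the second under $\widetilde L$ gives joint invariance under $\widetilde G\hookrightarrow \widetilde\oJ(\rw)\times\widetilde L$; since all four factors are genuine, the nontrivial element of $\Ker(\widetilde G\to G)$ acts by $(-1)\cdot(-1)=1$ on each side, so the pairing descends to a $G$-invariant pairing on $U\times V$. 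Non-degeneracy is inherited from that of the two factors, using nuclearity of $\omega_\psi$ and of the underlying spaces of $E$ and $E'$ so that the dual of a completed tensor product behaves as the tensor product of duals.

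For uniqueness, suppose $V_1\in \CF\CH_{G,\bar\psi}$ is contragredient to $U$ via a pairing $\la\,,\,\ra_1$. By Proposition \ref{factor2} we can write $V_1=\bar\omega_\psi\widehat\otimes F$ for a unique $F\in \CF\CH_{\widetilde L,\textrm{gen}}$. Applying Lemma \ref{factorf} to the $H$-invariance of $\la\,,\,\ra_1$ expresses it uniquely in the form $\la\,,\,\ra_{\psi_{\rz}}\otimes b$ for some continuous bilinear form $b:E\times F\to\C$. The remaining $L$-equivariance of $\la\,,\,\ra_1$ translates directly into $\widetilde L$-invariance of $b$, and non-degeneracy of $\la\,,\,\ra_1$ forces non-degeneracy of $b$. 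Thus $b$ exhibits $F$ as a Casselman-Wallach contragredient of $E$ in $\CF\CH_{\widetilde L,\textrm{gen}}$, so by uniqueness on the reductive side $F\cong E'$ and hence $V_1\cong V$ as contragredient representations.

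The main technical obstacle I anticipate is the transfer of non-degeneracy across the tensor product decomposition supplied by Lemma \ref{factorf}: one must argue carefully in both directions that non-degeneracy of $\la\,,\,\ra_{\psi_{\rz}}\otimes b$ on $(\omega_\psi\widehat\otimes E)\times(\bar\omega_\psi\widehat\otimes F)$ is equivalent to non-degeneracy of each of $\la\,,\,\ra_{\psi_{\rz}}$ and $b$. Nuclearity of $\omega_\psi$ and of the underlying Fr\'echet spaces of Casselman-Wallach representations is exactly what makes this argument routine; outside the nuclear setting this step would be delicate.
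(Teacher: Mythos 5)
Your proposal is correct and follows essentially the same route as the paper: both reduce to the reductive group $\widetilde L$ by using Lemma \ref{factorf} to identify $G$-invariant pairings on $U\times V$ with $\widetilde L$-invariant pairings on $E\times F$, and then invoke Casselman--Wallach theory (via Proposition \ref{factor2}) for existence and uniqueness of the contragredient on the $\widetilde L$ side. The only difference is that you spell out the descent of the tensor pairing to $G$ and the transfer of non-degeneracy, points the paper leaves implicit in the identification $\oB_G(U,V)=\oB_{\widetilde L}(E,F)$.
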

\begin{proof}
Lemma \ref{factorf} implies that
\[
  \oB_G(U,V)=\oB_{\widetilde L}(E,F).
\]
Therefore, in view of Proposition \ref{factor2}, the Proposition
follows from the corresponding result in the category
$\CF\CH_{\widetilde{L},\textrm{gen}}$. But the later is a
consequence of Casselman-Wallach's Theorem.
\end{proof}

\subsection{Generalized matrix coefficients}
Let $U$, $V$, $E$, $F$ be as before. Assume that $U$ and $V$, and
hence $E$ and $F$, are contragredient to each other. It is well
known that there is a representation $E^\rh$ in
$\CH\mathbf{mod}_{\widetilde{L},\textrm{gen}}$ so that its smoothing
coincides $E$ (cf. \cite[Section 3.1]{BK}). Denote by $F^\rh$ the
space of continuous linear functionals on $E^\rh$. As usual, it is a
representation in $\CH\mathbf{mod}_{\widetilde{L},\textrm{gen}}$
which is contragredient to $E^\rh$. Put
\[
  U^\rh:=\omega_{\psi}^\rh\widehat \otimes_\rh E^\rh\quad
  \textrm{and}\quad  V^\rh:=\bar \omega_{\psi}^\rh\widehat
  \otimes_\rh
  F^\rh.
\]
View them as representations in $\CH\mathbf{mod}_{G,\psi}$ and
$\CH\mathbf{mod}_{G,\bar \psi}$, respectively. Theorem \ref{thm0}
implies that
\[
  U^\rh_\infty=U\quad
  \textrm{and}\quad  V^\rh_\infty=V.
  \]
Denote by $U^{-\infty}$ the strong dual of $V$, and by $V^{-\infty}$
the strong dual of $U$. They are both representations of $G$ (due to
the fact that both $U$ and $V$ are Nuclear Fr\'{e}chet, and hence
reflexive spaces). Furthermore, $U^{-\infty}$ contains $U$ as a
dense subspace, and $V^{-\infty}$ contains $V$ as a dense subspace.

Note that $U^\rh$ and $V^\rh$ are strongly dual to each
other. By the argument of Section \ref{smatrix}, we get a separately
continuous bilinear map
\begin{equation}\label{sepm}
  U^{-\infty}\times V^{-\infty}\rightarrow \con^{-\xi}(G)
\end{equation}
which extends the usual matrix coefficient map. Note that all the
three spaces in \eqref{sepm} are strong duals of  reflexive
Fr\'{e}chet spaces. Therefore \cite[Theorem 41.1]{Tr} implies that
the map \eqref{sepm} is automatically continuous. Then \eqref{sepm}
induces a continuous linear map
\begin{equation}\label{mapc2}
   c \,:\, U^{-\infty}\widehat \otimes V^{-\infty}\rightarrow
   \con^{-\xi}(G).
\end{equation}
By \cite[Proposition 50.7]{Tr}, $U^{-\infty}\widehat \otimes
V^{-\infty}$ equals to the strong dual of $V\widehat \otimes U$.
Therefore \eqref{mapc2} is the transpose of a $G\times
G$-intertwining continuous linear map
\begin{equation}\label{mapc3}
 \RD^{\,\varsigma}(G)\rightarrow V\widehat \otimes U.
\end{equation}
Apply Lemma \ref{autot} to the group $G\times G$, we see that the
map \eqref{mapc3} is a topological homomorphism with closed image.
In view of the following lemma, we conclude that \eqref{mapc2} is
also a topological homomorphism with closed image. This proves
Corollary \ref{thm1}.

\begin{lem}(See \cite[Section IV.2, Theorem 1]{Bo} and \cite[Lemma 3.8]{SZ})\label{top2}
Let $\phi:E_1\rightarrow E_2$ be a continuous linear map of nuclear
Fr\'{e}chet spaces. Denote by $E_1'$ and $E_2'$ the strong duals of
$E_1$ and $E_2$, respectively. Then $\phi$ is a topological
homomorphism if and only if its transpose $\phi^t: E_2'\rightarrow
E_1'$ is. When this is the case, both $\phi$ and $\phi^t$ have
closed images.
\end{lem}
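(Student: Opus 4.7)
The plan is to establish the biconditional through the intermediate condition ``image is closed.'' Concretely, I would prove the chain
\[
  \text{$\phi$ is a top.\ homomorphism} \Leftrightarrow \Im(\phi)\text{ closed in }E_2 \Leftrightarrow \Im(\phi^t)\text{ closed in }E_1' \Leftrightarrow \text{$\phi^t$ is a top.\ homomorphism},
\]
from which the ``closed image'' part of the conclusion is automatic.

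For the outer equivalence on the $\phi$ side I would invoke the open mapping theorem for Fr\'echet spaces. If $\phi$ is a topological homomorphism, then $\Im(\phi)$ is homeomorphic to the Fr\'echet space $E_1/\Ker(\phi)$, hence complete and therefore closed in $E_2$. Conversely, if $\Im(\phi)$ is closed, it inherits a Fr\'echet structure from $E_2$, and the continuous bijection $E_1/\Ker(\phi) \to \Im(\phi)$ between Fr\'echet spaces is a topological isomorphism by the open mapping theorem. For the middle equivalence I would invoke the Dieudonn\'e--Schwartz closed range theorem in the Fr\'echet setting (the content of the cited Bourbaki reference): $\Im(\phi)$ is closed in $E_2$ if and only if $\Im(\phi^t)$ is closed in the strong dual $E_1'$, and in that case $\Im(\phi^t) = \Ker(\phi)^\perp$ and $\Ker(\phi^t) = \Im(\phi)^\perp$.

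For the outer equivalence on the $\phi^t$ side I would exploit that nuclear Fr\'echet spaces are Montel, hence reflexive, so $(E_i')' = E_i$ and $(\phi^t)^t = \phi$. Assuming the closed range condition, the identifications $E_2'/\Ker(\phi^t) = E_2'/\Im(\phi)^\perp \cong \Im(\phi)'$ and $\Im(\phi^t) = \Ker(\phi)^\perp \hookrightarrow E_1'$ express $\phi^t$ as the composition of a quotient of $E_2'$ and an inclusion into $E_1'$ realized through strong duals of a closed subspace and the ambient space respectively; under nuclearity these identifications are topological, producing the required homomorphism structure on $\phi^t$. The converse direction gives a closed image by the same completeness argument used in the Fr\'echet case, now carried out in the reflexive DF setting.

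The main obstacle lies in this last step, because the classical open mapping theorem does not apply to strong duals of Fr\'echet spaces (which are DF spaces, not Fr\'echet), so a direct symmetric argument is not available. The resolution is to combine reflexivity with nuclearity: on the strong duals of nuclear Fr\'echet spaces, the strong and weak topologies agree on bounded sets (Montel), and the natural identifications of quotients and subspaces of strong duals with strong duals of closed subspaces and quotients are topological. This is precisely the content packaged in the cited Bourbaki theorem, and it is the only nontrivial input beyond the classical open mapping theorem.
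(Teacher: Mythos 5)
The paper offers no argument for this lemma: it is quoted directly from \cite[Section IV.2, Theorem 1]{Bo} and \cite[Lemma 3.8]{SZ}. Your proposal essentially reconstructs the proof those references contain, and its skeleton is sound: for Fr\'echet spaces ``topological homomorphism'' is equivalent to ``closed image'' by the open mapping theorem; the Fr\'echet duality (closed range) theorem links closedness of $\Im(\phi)$ to closedness of $\Im(\phi^t)$; and, granting that the subspace/quotient dualities are topological, the factorization
\[
  E_2' \twoheadrightarrow E_2'/\Im(\phi)^{\perp} \cong (\Im \phi)' \cong (E_1/\Ker \phi)' \cong \Ker(\phi)^{\perp} \hookrightarrow E_1'
\]
does exhibit $\phi^t$ as a topological homomorphism, while the completeness argument handles the converse. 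So you are not taking a different route from the literature; you are filling in the route the paper delegates to its citations.

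Two points need more care than your write-up gives them. First, the classical closed range theorem for Fr\'echet spaces yields the equivalence of ``$\Im(\phi)$ closed'' with ``$\Im(\phi^t)$ closed for the weak-$*$ topology $\sigma(E_1',E_1)$''; to upgrade this to closedness in the strong dual you must already invoke reflexivity of $E_1$ (a strongly closed subspace of $E_1'$ is convex, hence closed for $\sigma(E_1',E_1'')$ by Hahn--Banach, and $E_1''=E_1$ since nuclear Fr\'echet spaces are Montel). You bring in reflexivity only at the third step, but it is needed in the middle one. Second, the topological identifications $E'/M^{\perp}\cong M'$ (strong duals) and $(E/M)'\cong M^{\perp}$ (induced topology) for a closed subspace $M$ are \emph{not} the content of the cited Bourbaki theorem, which concerns the weak topologies; they are a separate, genuinely nontrivial input. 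They do hold here precisely because nuclearity passes to $M$ and to $E/M$, so both are Fr\'echet--Montel: bounded subsets of the quotient are relatively compact and lift through the quotient map (null-sequence lifting), giving the second identification, while for the first both sides are bornological spaces in duality with $M$ (by reflexivity and distinguishedness), hence both carry the Mackey topology. With these supplied, your converse direction also closes properly: $E_2'/\Ker(\phi^t)=E_2'/(\overline{\Im \phi})^{\perp}\cong(\overline{\Im \phi})'$ is complete, so a homomorphic image is complete, hence closed --- note that completeness of such a quotient is not automatic for strong duals of Fr\'echet spaces and really does rest on this identification, not merely on the phrase ``reflexive DF setting.''
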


\end{document}